\newtheorem{theorem}{Theorem}[section]
\newtheorem{lemma}[theorem]{Lemma}
\newtheorem{proposition}[theorem]{Proposition}
\newtheorem{corollary}[theorem]{Corollary}
\theoremstyle{definition}
\newtheorem{definition}[theorem]{Definition}
\newtheorem{question}[theorem]{Question}
\theoremstyle{remark}
\newtheorem{remark}[theorem]{Remark}
\newcommand\nph{\varphi}
\DeclareMathOperator{\id}{id}
\DeclareMathOperator{\ran}{Ran}
\newcommand{\deq}{\stackrel{\text{\rm def}}{=}}
\newcommand{\cl}[1]{\mathcal{#1}}
\newcommand{\bb}[1]{\mathbb{#1}}
\newcommand{\sca}[1]{\left\langle#1\right\rangle} %
\def\dc#1{\expandafter\def\csname#1\endcsname{\mathcal{#1}}}
\def\db#1{\expandafter\def\csname b#1\endcsname{\mathbb{#1}}}
\def\df#1{\expandafter\def\csname f#1\endcsname{\mathfrak{#1}}}
\def\loopy#1#2{%
  \def#1##1{\def\next{#2{##1}#1}\ifx##1\relax\let\next\relax\fi\next}}
\loopy{\makemathcals}{\dc}
\loopy{\makemathbbs}{\db}
\loopy{\makemathfraks}{\df}
\let\epsilon\varepsilon
\let\phi\varphi
\newcommand{\rtext}[1]{\quad\text{#1}}
\def\@modulo#1#2{\@DT@modctr=#1\relax\divide \@DT@modctr by #2\relax
\multiply \@DT@modctr by #2\relax\advance #1 by -\@DT@modctr}
\newcommand{\xxivtime}{\@dtctr=\time\divide\@dtctr by 60
\ifnum\@dtctr<10 0\fi\the\@dtctr:\@dtctr=\time\@modulo{\@dtctr}{60}%
\ifnum\@dtctr<10 0\fi\the\@dtctr}
\begin{document}

\title
{Schur idempotents and hyperreflexivity}
\author{G. K. Eleftherakis, R. H. Levene and I. G. Todorov}
\renewcommand{\shortauthors}{Eleftherakis, Levene and Todorov}

\address{Department of Mathematics, Faculty of Sciences, University of Patras\\
265~00 Patras, Greece}
\email{gelefth@math.upatras.gr}

\address{School of Mathematical Sciences, University College Dublin\\ Belfield, Dublin 4, Ireland}
\email{rupert.levene@ucd.ie}

\address{Pure Mathematics Research Centre, Queen's University Belfast\\ Belfast BT7 1NN, United Kingdom}
\email{i.todorov@qub.ac.uk}

\date{14 July 2015}

\begin{abstract}
  We show that the set of Schur idempotents with hyperreflexive range is a Boolean lattice 
  which contains all contractions. We establish a preservation result for sums which implies 
  that the weak* closed span of a hyperreflexive and a ternary masa-bimodule is hyperreflexive, 
  and prove that the weak* closed span of finitely many tensor products of a hyperreflexive space 
  and a hyperreflexive range of a Schur idempotent (respectively, a ternary masa-bimodule) is hyperreflexive.
\end{abstract}

\maketitle

\section{Introduction}\label{s_intro}

Arveson's distance formula \cite{arv} has played a fundamental role 
in operator algebra theory since its discovery, inspiring a great deal 
of research in several distinct settings (see \cite{dav} and \cite{dale} and 
the references therein). 
First established for nest algebras \cite{a_nest}, it is an estimate for the distance 
of an operator $T$ to an operator algebra $\cl A$ in terms of the
norms of the compressions of $T$ to suitable \lq\lq corners'' 
arising from the invariant subspace lattice of $\cl A$. 
A minimax property, the distance formula is not easily verified in practice
due to, firstly, the difficulty of computing specific operator norms, and 
secondly, the lack of knowledge of the invariant subspaces of a general $\cl A$. 
It however implies that $\cl A$ is a \emph{reflexive} operator algebra (see \cite{arv} and \cite{ls}); 
the presence of a distance formula for $\cl A$ is hence known as the \emph{hyperreflexivity}
of $\cl A$.

Arveson recognised the importance of maximal abelian selfadjoint algebras (masas, for short)
in the study of non-selfadjoint (non-abelian) operator algebras
\cite{a} and pioneered the use of masa-bimodules in operator algebra theory.
These are precisely the weak* closed invariant subspaces of 
weak* continuous masa-bimodule maps, also known as Schur multipliers --
a class of transformations that has played a central role in operator space theory 
since  Haagerup's characterisation \cite{haag}. 
In \cite{eletod}, we studied connections between Schur idempotents 
and reflexive masa-bimodules. In \cite{eletod2}, this study was extended 
by considering tensor products and their relation to 
operator synthesis. 
These papers showed that Schur idempotents are very instrumental 
in questions about reflexivity and related properties, and can be 
particularly useful for establishing preservation results.

The present article focuses on the role of Schur idempotents 
in hyperreflexivity problems. 
After collecting necessary background and setting notation in Section \ref{s_prel}, 
in Section \ref{s_lhr} we show that the set of all Schur idempotents with hyperreflexive ranges
is a Boolean lattice. While we are not able
to determine whether every Schur idempotent $\Phi$ has hyperreflexive range, 
we show that, if $\Phi$ belongs to the 
Boolean lattice $\frak{C}$ generated by the set $\frak{I}_1$ of contractive Schur idempotents, 
then it does so. Our results can thus be viewed as a test 
for the well-known (open) problem of whether the Boolean lattice $\frak{C}$
coincides with the set of all Schur idempotents:
the existence of a Schur idempotent with non-hyperreflexive range 
would imply a negative answer to the latter question. 
As a corollary, we show that all 
Schur bounded patterns \cite{dd} give rise to hyperreflexive subspaces. 

In Section \ref{s_hs}, we examine the behaviour of hyperreflexivity 
with respect to linear spans. We show that the sum of a hyperreflexive masa-bimodule
and the hyperreflexive range of a Schur idempotent is hyperreflexive, 
and use this to obtain a general result about linear spans 
(Theorem \ref{1.7}) which implies that the weak* closed linear span 
of a hyperreflexive masa-bimodule and a ternary masa-bimodule is hyperreflexive. 
Ternary masa-bimodules are subspace versions of type I von Neumann algebras 
and, together with the (more general) ternary rings of operators, 
have been extensively studied (see, {\it e.g.},~\cite{blm}, \cite{eletod} and \cite{eletod2}).

In Section \ref{s_inters}, we obtain results, analogous to the ones from 
Section \ref{s_hs}, but for intersections as opposed to linear spans. 
In particular, we prove that the intersection of an arbitrary hyperreflexive 
masa-bimodule and a subspace belonging to a general class, containing all ternary masa-bimodules,
is hyperreflexive. 

In Section \ref{s_htp}, we show that (finite, weak* closed) linear spans, each of whose term 
is the tensor product of a hyperreflexive space and a ternary masa-bimodule, is, 
under some natural condition, necessarily 
hyperreflexive (Theorem \ref{888}). 
This is achieved by showing first that a similar result holds in the case where 
the ternary masa-bimodules are replaced by hyperreflexive ranges of Schur idempotents. 

We wish to note that the results below are formulated for subspaces 
of operators acting on a single Hilbert space,
but they hold more generally for subspaces of operators between different spaces;
we have chosen to work on one Hilbert space in order to avoid somewhat 
cumbersome formulations. 

\section{Preliminaries}\label{s_prel}

Throughout this paper, we fix a separable Hilbert space $H$
and let $\B(H)$ denote the space of all bounded linear
operators on $H$. The norm on $H$ and the uniform 
operator norm on $\cl B(H)$ will both be denoted by $\|\cdot\|$.
Let $\X$ be a subspace of $\B(H)$. If $T\in \B(H)$, then the
\emph{distance} of $T$ to $\X$ is
\[ d(T, \X ) =\inf_{ X\in \X }\|T-X\| \] and the \emph{Arveson
  distance} of~$T$ to~$\X $ is
\[ 
  \alpha(T,\X ) =
  \sup\left\{\inf_{ X\in \X}\|T\xi -X\xi\| : \xi\in H, \|\xi\|=1\right\}. 
\]
Trivially, $\alpha(T,\X)\leq d(T,\X)$, and both~$d$ and~$\alpha$ are
order-reversing in the second variable. We say that $\X $
is~\emph{reflexive}~\cite{ls} if, whenever $T\in \B(H)$ is such that 
$T\xi\in \overline{\cl X\xi}$ for all $\xi\in H$, then $T\in \cl X$.  
Reflexive spaces are necessarily closed in the weak operator topology, 
and a weak* closed subspace $\cl X$
is reflexive precisely when 
\[ \alpha(T,\X ) = 0\implies d(T,\X) = 0, \ \ \ T\in \cl B(H).\] 
If~$\X$ satisfies the stronger
condition that there exist $k>0$ with
\begin{equation}\label{eq_ck}
d(T, \X ) \leq k\, \alpha(T,\X ), \ \ \  T\in \B(H),
\end{equation}
then~$\X$ is said to be \emph{hyperreflexive}; in this case, the least
constant $k$ for which (\ref{eq_ck}) holds is denoted by $k(\X)$ and
called the \emph{hyperreflexivity constant} of $\cl X$.  The space
$\X$ is called \emph{completely hyperreflexive} if $\X
\bar\otimes\B(\H )$ is hyperreflexive, where here and in the sequel
$\H$ is a separable infinite dimensional Hilbert space and
$\bar\otimes$ denotes the spatial weak* closed tensor product.  The
\emph{complete hyperreflexivity constant}~$k_c(\X)$ of $\X$ is by
definition the hyperreflexivity constant of $\X\bar\otimes\B(\H)$.  We
remark in passing that whether every hyperreflexive space is
completely hyperreflexive remains an open question~\cite{dale}.

\smallskip

We fix throughout a maximal abelian selfadjoint algebra (for short, masa) $\cl D$ on $H$. 
We denote by $\cl P(\cl D)$ the set of all projections in $\cl D$. 
A \emph{Schur multiplier} is a weak* continuous $\cl D$-bimodule map on $\cl B(H)$. 
The set of all Schur multipliers is a commutative algebra under pointwise addition and composition. 
If $\Phi$ is a Schur multiplier, we write $\|\Phi\|$
for the norm of~$\Phi$ as a linear map on the Banach space $\cl B(H)$.

A \emph{Schur idempotent} is a Schur multiplier $\Phi$ that is also an idempotent.
We denote by $\fI$ the collection of all Schur idempotents. It is easy to see that 
$\fI$ is a lattice under the operations $\Phi\wedge \Psi = \Phi \Psi$ and
$\Phi\vee \Psi = \Phi + \Psi - \Phi\Psi$, which is moreover Boolean for the 
complementation $\Phi\to \Phi^{\perp} \deq \id - \Phi$. 
For $\Phi,\Psi\in \fI$ we write $\Phi\leq \Psi$ if $\Phi\Psi = \Phi$, and
we denote by $\ran\Phi$ the range of $\Phi$.
We refer the reader to \cite{eletod} and \cite{levene} for more details on Schur idempotents. 

By a \emph{$\cl D$-bimodule} (or a \emph{masa-bimodule} when $\cl D$ is clear from the context) 
we mean a subspace $\cl X\subseteq \cl B(H)$ such that $\cl D\cl X\cl D\subseteq \cl X$. 
All  masa-bimodules in the sequel are assumed to be weak* closed. 
If $\Phi\in \fI$ then $\ran\Phi$ is easily seen to be a masa-bimodule. 

The statements in the next remark are straightforward. 

\begin{remark}\label{1.2} 
We have
\[  \alpha(T,\X) =\sup \{|\sca{T\xi, \eta}|: \;\;\|\xi \|=\|\eta\|=1, \; \sca{X\xi , \eta}=0,
\mbox{ for all } X\in \X\}. \]
Furthermore, if $\X$ is a $\D$-bimodule then  
\[ \alpha(T,\X) =\sup\big\{\|QTP\|:\;\;P,Q\;\in \P(\D), \;\; Q\X P=\{0\}\big\}. \]
\end{remark}

\section{The lattice of hyperreflexive ranges}\label{s_lhr}

In this section, we give a characterisation of the Schur idempotents with hyperreflexive
ranges and show that they form a sublattice of
the lattice $\frak{I}$ of all Schur idempotents.  We start by
formulating an alternative expression of the Arveson distance which
will prove useful in the sequel.

We write~$\fI_1=\{ \Phi\in\fI\colon \|\Phi\|\leq 1\}$ for the set of
contractive Schur idempotents. It was shown in \cite{kp} that a Schur idempotent $\Phi$
belongs to $\fI_1$ if and only if there exist families $(P_i)_{i\in \bb{N}}$
and $(Q_i)_{i\in \bb{N}}$ of mutually orthogonal projections in $\cl D$
such that 
\begin{equation}\label{eq_form1}
\Phi(T) = \sum_{i=1}^{\infty} Q_i T P_i, \ \ \ \ T\in \cl B(H),
\end{equation}
where the series converges in the weak* topology. 

\begin{proposition}\label{p_crr}
  Let $\X\subseteq \B(H)$ be a weak* closed $\cl D$-bimodule.  Then
  \[ 
  \alpha(T,\X) = 
  \sup\{\|\Phi(T)\| : \Phi\in \fI_1\text{ and } \Phi(\X) = \{0\}\}. 
  \]
\end{proposition}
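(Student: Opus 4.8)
The plan is to prove the two inequalities separately, writing $\beta(T,\X)$ for the supremum on the right-hand side.

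For the inequality $\beta(T,\X)\le\alpha(T,\X)$, I would fix $\Phi\in\fI_1$ with $\Phi(\X)=\{0\}$ and use the form \eqref{eq_form1} provided by \cite{kp}, so that $\Phi(T)=\sum_i Q_iTP_i$ with $(P_i)$ and $(Q_i)$ families of mutually orthogonal projections in $\D$. The first key observation is that, because the ranges of the $Q_i$ are pairwise orthogonal, for every unit vector $\xi$ one has $Q_i\Phi(T)\xi=Q_iTP_i\xi$, and hence $\|\Phi(T)\xi\|^2=\sum_i\|Q_iTP_i\xi\|^2\le(\sup_i\|Q_iTP_i\|^2)\sum_i\|P_i\xi\|^2\le\sup_i\|Q_iTP_i\|^2$; this yields $\|\Phi(T)\|\le\sup_i\|Q_iTP_i\|$. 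The second key observation is that $\Phi(\X)=\{0\}$ forces $Q_iXP_i=0$ for every $X\in\X$ and every $i$: applying the identity $\sum_j Q_jXP_j=0$ to a vector of the form $P_i\xi$ and using $P_jP_i=\delta_{ij}P_i$ leaves only the $j=i$ term. Thus $Q_i\X P_i=\{0\}$ for each $i$, and Remark \ref{1.2} gives $\|Q_iTP_i\|\le\alpha(T,\X)$. Combining these, $\|\Phi(T)\|\le\alpha(T,\X)$, and taking the supremum over all admissible $\Phi$ gives the claim.

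For the reverse inequality $\alpha(T,\X)\le\beta(T,\X)$, I would start from the bimodule formula in Remark \ref{1.2}, namely $\alpha(T,\X)=\sup\{\|QTP\|:P,Q\in\P(\D),\ Q\X P=\{0\}\}$. Given such a pair $P,Q$, the map $\Phi(S)=QSP$ is a Schur multiplier, is idempotent (since $Q(QSP)P=QSP$), and is contractive, so $\Phi\in\fI_1$; moreover $\Phi(\X)=Q\X P=\{0\}$. Hence $\|QTP\|=\|\Phi(T)\|\le\beta(T,\X)$, and taking the supremum over admissible $P,Q$ completes the proof.

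The only genuinely delicate point is the norm identity $\|\Phi(T)\xi\|^2=\sum_i\|Q_iTP_i\xi\|^2$ in the first step, which requires justifying that $\Phi(T)\xi$ is the orthogonal sum of the vectors $Q_iTP_i\xi$. Since the defining series converges only weak*, I would make this rigorous by testing against vector functionals: the partial sums $\sum_{i=1}^n Q_iTP_i\xi$ converge weakly to $\Phi(T)\xi$ in $H$, and as they lie in the mutually orthogonal ranges of the $Q_i$, the Pythagorean identity passes to the limit. Everything else is routine.
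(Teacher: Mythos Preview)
Your proof is correct and follows essentially the same route as the paper's: both directions rest on Remark~\ref{1.2} together with the Katavolos--Paulsen representation~\eqref{eq_form1}, reducing the question to the corner maps $T\mapsto Q_iTP_i$. The only difference is that the paper simply asserts the two facts $\|\Phi(T)\|=\sup_i\|Q_iTP_i\|$ and $Q_i\X P_i=\{0\}$, whereas you supply the (short) justifications; your treatment of the convergence issue in the final paragraph is adequate, though one can shortcut it by noting that $\|\sum_{i\le n}Q_iTP_i\|\le\sup_i\|Q_iTP_i\|$ uniformly in $n$ and invoking weak*-lower semicontinuity of the norm.
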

\begin{proof}
  Let~$M$ be the supremum on the right hand side. By Remark~\ref{1.2},
  \[ 
  \alpha(T,\X) = \sup\{\|QTP\| : P,Q\in \cl P(\cl D) \mbox{ and }
    Q\X P = \{0\}\}.
  \] 
  Since any map of the form $T\mapsto QTP$ (where $P,Q\in \cl P(\cl D)$) 
  is in~$\fI_1$, we have $\alpha(T,\X)\leq M$.  
Conversely, suppose that $\Phi\in \fI_1$ and $\Phi(\X) = \{0\}$. 
Represent $\Phi$ as in (\ref{eq_form1}); then 
$Q_i \X P_i = \{0\}$ for each~$i$.  On the other hand, $\|\Phi(T)\| =
  \sup_{i\in \bN } \|Q_iTP_i\|\leq \alpha(T,\X)$, so $M\leq
  \alpha(T,\X)$.
\end{proof}

If~$\Phi\in \fI$, write
\[ \cl N_1(\Phi)=\{\Sigma\in \fI_1\colon \Sigma\Phi=0\}.\]
The following corollary is a direct consequence of Proposition \ref{p_crr}. 
  
\begin{corollary}\label{cor:alpha_range}
  If~$\Phi\in\fI$ and $T\in \B(H)$ then
  \[ \alpha(T,\ran \Phi)=\sup_{\Theta\in \cl N_1(\Phi)} \|\Theta(T)\|.\]
\end{corollary}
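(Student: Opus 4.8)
The plan is to apply Proposition~\ref{p_crr} with $\X=\ran\Phi$. For this to be legitimate I first need to know that $\ran\Phi$ is a weak* closed $\cl D$-bimodule. That it is a masa-bimodule was already observed in Section~\ref{s_prel}; and it is weak* closed because $\ran\Phi=\ker(\id-\Phi)$ (as $\Phi$ is idempotent), while $\id-\Phi$ is a weak* continuous map, so that its kernel is weak* closed.

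The heart of the matter is then to match the side condition appearing in Proposition~\ref{p_crr} with the defining condition of $\cl N_1(\Phi)$; that is, for $\Theta\in\fI_1$ I claim that
\[ \Theta(\ran\Phi)=\{0\}\iff \Theta\Phi=0. \]
Both implications are immediate. If $\Theta\Phi=0$ then $\Theta$ annihilates every operator of the form $\Phi(T)$, and these exhaust $\ran\Phi$, so $\Theta(\ran\Phi)=\{0\}$. Conversely, if $\Theta(\ran\Phi)=\{0\}$ then, since $\Phi(T)\in\ran\Phi$ for every $T\in\B(H)$, we get $(\Theta\Phi)(T)=\Theta(\Phi(T))=0$ for all such $T$, whence $\Theta\Phi=0$.

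With this equivalence in hand, Proposition~\ref{p_crr} applied to $\X=\ran\Phi$ yields
\[ \alpha(T,\ran\Phi)=\sup\{\|\Theta(T)\|:\Theta\in\fI_1,\ \Theta(\ran\Phi)=\{0\}\}=\sup\{\|\Theta(T)\|:\Theta\in\cl N_1(\Phi)\}, \]
which is precisely the asserted formula. I do not anticipate any genuine obstacle here: the statement is a direct corollary, and the only points requiring (routine) verification are the weak* closedness of $\ran\Phi$, so that Proposition~\ref{p_crr} is applicable, and the elementary equivalence of the two side conditions displayed above.
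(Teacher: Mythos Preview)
Your proof is correct and follows exactly the approach the paper intends: the corollary is stated as a direct consequence of Proposition~\ref{p_crr}, and you have simply spelled out the routine verifications (weak* closedness of $\ran\Phi$ and the equivalence $\Theta(\ran\Phi)=\{0\}\iff\Theta\Phi=0$) that make this immediate.
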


We next single out a simple condition that formally implies
hyperreflexivity. It is based on the fact that, 
if $\Phi$ is a Schur idempotent and $T\in \cl B(H)$, then
there is a canonical approximant of $T$ within $\ran\Phi$, namely the operator $\Phi(T)$. 

\begin{definition}\label{es} 
  We write $\fH$ for the set of Schur idempotents $\Phi \in\fI$ with
  the following property: there exists $\lambda > 0$ such that
  \[ \|\Phi^\perp (T)\|\leq \lambda\, \alpha(T,\ran\Phi), \ \ \ T\in
  \B(H).
  \]
  The least constant $\lambda$ with this property will be denoted by
  $\lambda(\Phi)$.
\end{definition}

If $\Phi\in \fI$ and $\ran\Phi$ is hyperreflexive, it will be convenient to 
denote by $k(\Phi)$ the hyperreflexivity constant $k(\ran\Phi)$. 

\begin{remark}\label{r_hh}
  Since $d(T,\ran\Phi)\leq \|T-\Phi(T)\|=\|\Phi^\perp(T)\|$, we see
  that if $\Phi\in \fH$, then $\ran\Phi$ is hyperreflexive and
  $k(\Phi)\leq \lambda(\Phi)$. We will show shortly that
  $\fH$ is precisely the set of Schur idempotents with
  hyperreflexive range.
\end{remark}

\begin{remark}\label{r_imm}
  In view of Proposition~\ref{p_crr}, if $\Phi$ is a Schur idempotent
  then $\Phi^{\perp}\in \fH$ precisely when there exists $\lambda > 0$ such that
$$\|\Phi(T)\|\leq  \lambda \sup\{\|\Theta(T)\| : \Theta\in \fI_1, \Theta\leq \Phi\}.$$
In particular, if $\Phi\in \fI_1$, then $\Phi^\perp\in\fH$ and $\lambda(\Phi^\perp)=1$.
\end{remark}

Recall that $\cl B(H)$ is the dual Banach space of the trace class
$\cl T(H)$ on $H$. 
Every element $\omega\in \cl T(H)$ 
is thus viewed as both an operator on $H$ and as a (weak* continuous) linear
functional on $\cl B(H)$; %
we denote by $\langle T,\omega\rangle$ the pairing 
between $T\in \cl B(H)$ and $\omega\in \cl T(H)$. 
If $f,g\in H$, we denote by $f\otimes g$ the rank one operator on $H$ given by 
$(f\otimes g)(\xi) = (\xi,g) f$, $\xi\in H$. 
We have that $\langle T, f\otimes g\rangle = (Tf,\overline{g})$, for 
a conjugate-linear isometry $g\to\overline{g}$ on $H$. 
If $\omega\in \cl T(H)$ then 
$\omega = \sum_{i=1}^{\infty} \omega_k$ in the trace norm $\|\cdot\|_1$, 
where $\omega_k$, $k\in \bb{N}$, are operators of rank one such that 
$\sum_{k=1}^{\infty} \|\omega_k\|_1 <\infty$. 

If $\cl X\subseteq \cl B(H)$, let 
$$\cl X_{\perp} = \{\omega\in \cl T(H) : \langle T,\omega\rangle = 0, \ \mbox{ for all } T\in \cl X\}$$
be the pre-annihilator of $\cl X$ in $\cl T(H)$.  The following result
was proved by Arveson \cite{arv} in the case the space $\cl X$ is a
unital algebra. The proof for the case where $\cl X$ is a subspace is
a straightforward modification of Arveson's arguments; this is also a
special case of~\cite[Theorem~2.2]{had}.

\begin{theorem}\label{arve}
Let $\X\subseteq \cl B(H)$ be a reflexive space. The following are equivalent: 

(i) \ $\cl X$ is hyperreflexive and $k(\cl X) \leq r$; 

(ii) for every $\omega\in\X_\bot$ and every $\epsilon>0$ there exists a sequence 
$(\omega_i)_{i\in \bb{N}}\subset\X_\bot$ of rank one operators such that 
\[\sum_{i=1}^{\infty}\|\omega_i\|_1 < (r + \epsilon) \|\omega\|_1 \ \mbox{ and } \ 
\omega = \sum_{i=1}^{\infty} \omega_i,\]
where the latter series converges in the trace norm. 
\end{theorem}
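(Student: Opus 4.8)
The plan is to translate both quantities $d(T,\X)$ and $\alpha(T,\X)$ into suprema of $|\langle T,\omega\rangle|$ over suitable subsets of the pre-annihilator $\X_\perp$, and then run a duality argument. First, since $\X$ is weak* closed, the standard quotient-norm duality gives
\[ d(T,\X)=\sup\{|\langle T,\omega\rangle| : \omega\in\Ball(\X_\perp),\ \|\omega\|_1\le 1\}. \]
Second, using Remark~\ref{1.2} together with the rank-one pairing $\langle X,\xi\otimes\overline{\eta}\rangle=(X\xi,\eta)$, the condition $(X\xi,\eta)=0$ for all $X\in\X$ is exactly $\xi\otimes\overline{\eta}\in\X_\perp$, and $\|\xi\otimes\overline{\eta}\|_1=1$. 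Hence, writing $R$ for the set of rank one elements of $\X_\perp$ of trace norm at most one, I get
\[ \alpha(T,\X)=\sup\{|\langle T,\omega\rangle| : \omega\in R\}. \]
These two formulas are the backbone of the argument.

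The direction (ii)$\Rightarrow$(i) is then immediate. Given $\omega\in\X_\perp$ with $\|\omega\|_1=1$ and a decomposition $\omega=\sum_i\omega_i$ with $\omega_i$ rank one in $\X_\perp$ and $\sum_i\|\omega_i\|_1<r+\epsilon$, I estimate, for any $T$,
\[ |\langle T,\omega\rangle|\le\sum_i\|\omega_i\|_1\,\bigl|\bigl\langle T,\tfrac{\omega_i}{\|\omega_i\|_1}\bigr\rangle\bigr|\le\Bigl(\sum_i\|\omega_i\|_1\Bigr)\,\alpha(T,\X)<(r+\epsilon)\,\alpha(T,\X), \]
since each normalised $\omega_i$ lies in $R$. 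Taking the supremum over the unit ball of $\X_\perp$ and letting $\epsilon\to 0$ yields $d(T,\X)\le r\,\alpha(T,\X)$, i.e.\ $k(\X)\le r$.

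The substantive direction is (i)$\Rightarrow$(ii). I set $C=\overline{\operatorname{absconv}}(R)$, the trace-norm closed absolutely convex hull of $R$. Because $R$ is balanced (closed under multiplication by unimodular scalars), the support function of $C$ agrees with that of $R$, so $\alpha(T,\X)=\sup_{\omega\in C}\operatorname{Re}\langle T,\omega\rangle=\sup_{\omega\in rC}\frac1r\operatorname{Re}\langle T,\omega\rangle$. The hyperreflexivity inequality $d\le r\alpha$ then reads as the support-function domination $\sup_{\Ball(\X_\perp)}\operatorname{Re}\langle T,\cdot\rangle\le\sup_{rC}\operatorname{Re}\langle T,\cdot\rangle$ for every $T\in\cl B(H)$. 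A Hahn--Banach separation argument (the bipolar theorem for the dual pair $(\cl T(H),\cl B(H))$) converts this into the inclusion of $\Ball(\X_\perp)$ in the $\sigma(\cl T(H),\cl B(H))$-closure of $rC$. The key observation is that, since $\cl B(H)=\cl T(H)^*$, this weak topology is precisely $\sigma(\cl T(H),\cl T(H)^*)$, so by Mazur's theorem the weak and trace-norm closures of the convex set $rC$ coincide. Thus every $\omega\in\Ball(\X_\perp)$ lies in the \emph{trace-norm} closure of $rC$.

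The remaining task, and the main technical obstacle, is to upgrade this norm-density statement into a genuine absolutely convergent series with controlled total mass. Fixing $\omega\in\X_\perp$ with $\|\omega\|_1=1$ and $\epsilon>0$, I approximate $\omega$ by a finite absolutely convex combination $\mu_1\in rC$ of rank one elements of $\X_\perp$ (total trace mass at most $r$) with $\|\omega-\mu_1\|_1<\delta_1$; then $\omega-\mu_1\in\|\omega-\mu_1\|_1\cdot\overline{rC}$, and I repeat the approximation on the residual, obtaining finite combinations $\mu_2,\mu_3,\dots$ whose total trace masses are bounded by $r\delta_1,r\delta_2,\dots$. Choosing the residual bounds $\delta_k$ to decay geometrically makes the residuals tend to zero (so the concatenated sequence of rank one summands sums to $\omega$ in trace norm) while keeping the grand total below $r(1+\delta_1+\delta_2+\cdots)<r+\epsilon$. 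Listing all the rank one pieces as a single sequence $(\omega_i)$ then delivers (ii). The only points requiring care are the balancedness of $R$ (so that the absolutely convex hull has the correct support function), the correct identification of the relevant weak topology with $\sigma(\cl T(H),\cl T(H)^*)$ so that Mazur applies, and the geometric bookkeeping of residuals in the iterative construction.
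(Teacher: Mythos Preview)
Your argument is correct: the two representations of $d(T,\X)$ and $\alpha(T,\X)$ as suprema over $\Ball(\X_\perp)$ and over the rank-one contractions in $\X_\perp$, respectively, are exactly right (the latter via Remark~\ref{1.2} and the paper's pairing convention), the bipolar/Mazur step gives $\Ball(\X_\perp)\subseteq r\,\overline{\operatorname{absconv}}(R)$, and the geometric iteration produces the required absolutely convergent series with total mass below $r+\epsilon$. Note that the paper does not actually supply its own proof of this theorem: it records the result as known, pointing to Arveson's original argument in \cite{arv} (for unital algebras) and to \cite[Theorem~2.2]{had} for the general subspace case. What you have written is precisely the standard Arveson-style proof, so there is nothing to compare --- your approach is the canonical one.
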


If $\Phi$ is a Schur idempotent, we write $\Phi_*$ for the predual of $\Phi$, acting 
on the trace class $\cl T(H)$. 

\begin{lemma}\label{l_prephi}
If $\Phi\in \fI$ and $\omega\in \cl T(H)$ then $\Phi^{\perp}_*(\omega)\in (\ran\Phi)_{\perp}$.
\end{lemma}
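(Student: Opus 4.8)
The plan is to unwind the definition of the pre-annihilator $(\ran\Phi)_\perp$ and reduce everything to the defining property of the predual together with the idempotency of $\Phi$. Fixing $\omega\in\cl T(H)$, to prove $\Phi^\perp_*(\omega)\in(\ran\Phi)_\perp$ I must verify that $\langle T,\Phi^\perp_*(\omega)\rangle=0$ for every $T\in\ran\Phi$. First I would record that, since $\Phi$ is a weak* continuous map on $\cl B(H)=\cl T(H)^*$, it is the Banach-space adjoint of the bounded linear map $\Phi_*$ on $\cl T(H)$, so that $\langle \Phi(T),\omega\rangle=\langle T,\Phi_*(\omega)\rangle$ for all $T\in\cl B(H)$ and $\omega\in\cl T(H)$. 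As $\Phi^\perp=\id-\Phi$ and the assignment of preduals is linear, we have $\Phi^\perp_*=\id-\Phi_*$, and consequently $\langle\Phi^\perp(T),\omega\rangle=\langle T,\Phi^\perp_*(\omega)\rangle$.

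The second ingredient is the standard fact that the range of an idempotent coincides with its fixed-point set. Concretely, if $T\in\ran\Phi$ then $T=\Phi(S)$ for some $S\in\cl B(H)$, whence idempotency yields $\Phi(T)=\Phi^2(S)=\Phi(S)=T$; therefore $\Phi^\perp(T)=T-\Phi(T)=0$. Combining this with the duality identity above, for any $T\in\ran\Phi$ I obtain $\langle T,\Phi^\perp_*(\omega)\rangle=\langle\Phi^\perp(T),\omega\rangle=\langle 0,\omega\rangle=0$, which is precisely the statement that $\Phi^\perp_*(\omega)\in(\ran\Phi)_\perp$.

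Since the argument is essentially a one-line duality computation, I do not anticipate any genuine obstacle here; the lemma is really a bookkeeping step that will feed into the application of Theorem \ref{arve}. The only points requiring a word of care are the routine verifications that $\Phi$ admits a predual at all (guaranteed by its weak* continuity) and that passing to preduals respects the complementation $\Phi\mapsto\Phi^\perp$, both of which follow at once from linearity.
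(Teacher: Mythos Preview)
Your proof is correct and follows essentially the same approach as the paper: both arguments reduce to the duality identity $\langle T,\Phi^{\perp}_*(\omega)\rangle=\langle \Phi^{\perp}(T),\omega\rangle$ together with the observation that $\Phi^{\perp}(T)=0$ for $T\in\ran\Phi$. The paper's version is simply the one-line display $\langle T,\Phi^{\perp}_*(\omega)\rangle=\langle \Phi(T),\Phi^{\perp}_*(\omega)\rangle=\langle \Phi^{\perp}\Phi(T),\omega\rangle=0$, which is your computation with the steps reordered.
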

\begin{proof}
If $T\in \ran\Phi$ then 
\[\langle T,\Phi^{\perp}_*(\omega)\rangle = \langle \Phi(T),\Phi^{\perp}_*(\omega)\rangle
= \langle \Phi^{\perp}\Phi(T),\omega\rangle = 0.\qedhere\]
\end{proof}

\begin{theorem}\label{300} 
  Let $\Phi$ be a Schur idempotent. The following are equivalent:  
   
(i) \ $\ran\Phi$ is hyperreflexive;

(ii)  $\Phi\in\fH$.

Moreover, if these conditions hold then $\lambda(\Phi)\leq k(\Phi)\|\Phi^\perp\|$.
\end{theorem}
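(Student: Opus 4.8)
The plan is to note that one implication is already in hand: Remark~\ref{r_hh} establishes (ii)$\Rightarrow$(i) together with the inequality $k(\Phi)\leq\lambda(\Phi)$. It therefore remains to prove (i)$\Rightarrow$(ii), and I would aim to do this quantitatively so as to extract the bound $\lambda(\Phi)\leq k(\Phi)\|\Phi^\perp\|$ in the same stroke.

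The central observation is that $\Phi^\perp$ annihilates $\ran\Phi$: if $X\in\ran\Phi$ then $\Phi(X)=X$ by idempotency, so $\Phi^\perp(X)=X-\Phi(X)=0$. Consequently, for every $T\in\cl B(H)$ and every $X\in\ran\Phi$, linearity gives $\Phi^\perp(T)=\Phi^\perp(T-X)$, whence
\[
\|\Phi^\perp(T)\|=\|\Phi^\perp(T-X)\|\leq\|\Phi^\perp\|\,\|T-X\|.
\]
Taking the infimum over $X\in\ran\Phi$ yields $\|\Phi^\perp(T)\|\leq\|\Phi^\perp\|\,d(T,\ran\Phi)$. Assuming (i), that is, that $\ran\Phi$ is hyperreflexive with constant $k(\Phi)$, I would then invoke the distance formula $d(T,\ran\Phi)\leq k(\Phi)\,\alpha(T,\ran\Phi)$ to conclude
\[
\|\Phi^\perp(T)\|\leq k(\Phi)\,\|\Phi^\perp\|\,\alpha(T,\ran\Phi),\qquad T\in\cl B(H).
\]
This simultaneously shows that $\Phi\in\fH$ and that $\lambda(\Phi)\leq k(\Phi)\|\Phi^\perp\|$, completing the argument.

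I do not anticipate a genuine obstacle here: once the annihilation identity $\Phi^\perp(X)=0$ for $X\in\ran\Phi$ is isolated, the statement reduces to a short chain of estimates. The only point requiring care is conceptual rather than technical, namely that $\Phi(T)$ furnishes a canonical but not necessarily optimal approximant of $T$ in $\ran\Phi$; thus the elementary bound $d(T,\ran\Phi)\leq\|\Phi^\perp(T)\|$ used in Remark~\ref{r_hh} and the reverse-type estimate derived above are genuinely distinct inequalities, and combining them confines $\lambda(\Phi)$ to the interval between $k(\Phi)$ and $k(\Phi)\|\Phi^\perp\|$. In particular, the machinery of Theorem~\ref{arve} and Lemma~\ref{l_prephi} is not needed for this equivalence, though it will presumably be required later to control hyperreflexivity constants under the lattice operations.
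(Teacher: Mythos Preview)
Your proof is correct and takes a genuinely more elementary route than the paper's. The paper establishes (i)$\Rightarrow$(ii) via the dual characterisation of hyperreflexivity in Theorem~\ref{arve}: it selects near-norming unit vectors $\xi,\eta$ for $\Phi^\perp(T)$, observes using Lemma~\ref{l_prephi} that $\Phi^\perp_*(\xi\otimes\eta)\in(\ran\Phi)_\perp$ with trace norm at most $\|\Phi^\perp\|$, decomposes this element as a trace-norm convergent sum of rank-one preannihilators $\omega_k$ with $\sum_k\|\omega_k\|_1\leq(k(\Phi)+\epsilon)\|\Phi^\perp\|$, and then bounds each pairing $|\langle T,\omega_k\rangle|$ by $\|\omega_k\|_1\,\alpha(T,\ran\Phi)$ via Remark~\ref{1.2}. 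You bypass all of this by using the annihilation identity $\Phi^\perp|_{\ran\Phi}=0$ directly to obtain $\|\Phi^\perp(T)\|\leq\|\Phi^\perp\|\,d(T,\ran\Phi)$, and then invoke the hyperreflexivity inequality as stated rather than its predual reformulation. Your argument is shorter and more transparent for this statement; the paper's approach, while heavier here, foreshadows the genuine use of Theorem~\ref{arve} later in Theorem~\ref{777}, where the space under consideration is not the range of a single idempotent and no direct distance estimate of your type is available. (A minor aside: the lattice-operation proofs in Theorem~\ref{est} do not in fact rely on Theorem~\ref{arve}; its essential role is in the tensor-product section.)
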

\begin{proof}  
(ii)$\Rightarrow$(i) was pointed out in Remark~\ref{r_hh}.

(i)$\Rightarrow$(ii)  
Let $k=k(\Phi)$ and fix $T\in\B(H)$.  For $\epsilon>0$ there exist unit vectors
  $\xi,\eta \in H$ with
\begin{equation}\label{eq_xieta}
  \|\Phi^\perp(T)\|-\epsilon<|(\Phi^\perp(T)\xi,\overline{\eta})|
  =|\langle\Phi^\perp(T),\xi\otimes\eta\rangle|=|\langle T, \Phi^\perp_*
  (\xi\otimes\eta)\rangle|. 
\end{equation}
By Lemma \ref{l_prephi}, $\Phi^\perp_*(\xi\otimes\eta)\in(\ran\Phi)_\bot$.
  Clearly,
  \[\|\Phi^\perp_* (\xi\otimes \eta)\|_1 \leq \|\Phi^\perp_*\|\,
  \|\xi\otimes \eta\|_1 =\|\Phi^\perp\|.\] 
  By Theorem~\ref{arve},
  there exist rank one operators $\omega_k\in (\ran\Phi)_\bot$, $k\in \bb{N}$, 
   such that
$$ \sum_{k=1}^\infty\|\omega_k\|_1
  \leq(k+\epsilon) \|\Phi^\bot\|  
  \ \ \mbox{ and } \ \ 
  \Phi^\perp_* (\xi\otimes\eta) = \sum_{k=1}^\infty \omega_k.$$ 
  By Remark~\ref{1.2} and~(\ref{eq_xieta}), 
\begin{eqnarray*}
  \|\Phi^\perp(T)\|-\epsilon
  & < & 
  \sum_{k=1}^\infty|\langle T, \omega_k \rangle| \leq \sum_{k=1}^\infty\|\omega_k\|_1 \alpha(T,\ran\Phi)\\
  & \leq & (k + \epsilon)\|\Phi^\perp\|\alpha(T,\ran \Phi).
\end{eqnarray*}
Since $\epsilon$ is arbitrary, 
$\|\Phi^\perp(T)\| \leq k \|\Phi^\bot\|\alpha(T,\ran\Phi)$. 
Thus, $\Phi\in \fH$ and $\lambda(\Phi)\leq k\|\Phi^\perp\|$.
\end{proof}

We next prove that the set~$\fH$ is closed under the lattice operations.

\begin{lemma}\label{lemma:complement}
  Let~$\Phi\in\fH$ and $\Sigma\in\fI_1$. Then the Schur idempotent
  $\Psi \deq (\Phi^\perp \Sigma)^\perp$
  belongs to $\fH$ and $\lambda(\Psi)\leq \lambda(\Phi).$
\end{lemma}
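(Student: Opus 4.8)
The plan is to translate both the hypothesis $\Phi\in\fH$ and the target conclusion $\Psi\in\fH$ into the supremum form provided by Corollary~\ref{cor:alpha_range}, and then to feed the operator $\Sigma(T)$ into the hypothesis. Before doing so I would record the elementary lattice bookkeeping. Since the Schur multipliers form a commutative algebra and $\Phi^\perp,\Sigma$ are idempotents, the product $\Psi^\perp=\Phi^\perp\Sigma=\Sigma\Phi^\perp$ is again a Schur idempotent (the meet $\Phi^\perp\wedge\Sigma$), so $\Psi\in\fI$ is well defined. I would also note that for any idempotent $\Theta$ one has $\Theta\Phi=0$ if and only if $\Theta\leq\Phi^\perp$ (apply $\Theta$ to $\id=\Phi+\Phi^\perp$); hence $\cl N_1(\Phi)=\{\Theta\in\fI_1:\Theta\leq\Phi^\perp\}$, and likewise $\cl N_1(\Psi)=\{\Theta\in\fI_1:\Theta\leq\Psi^\perp\}$.

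Next I would rewrite the hypothesis. By the definition of $\fH$ together with Corollary~\ref{cor:alpha_range}, the assumption $\Phi\in\fH$ says precisely that
\[ \|\Phi^\perp(S)\|\leq\lambda(\Phi)\sup_{\Xi\in\fI_1,\ \Xi\leq\Phi^\perp}\|\Xi(S)\|,\qquad S\in\B(H). \]
Applying this inequality with $S=\Sigma(T)$ and recalling $\Psi^\perp=\Phi^\perp\Sigma$ gives
\[ \|\Psi^\perp(T)\|=\|\Phi^\perp\Sigma(T)\|\leq\lambda(\Phi)\sup_{\Xi\in\fI_1,\ \Xi\leq\Phi^\perp}\|\Xi\Sigma(T)\|. \]

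The crux is then to bound this supremum by $\alpha(T,\ran\Psi)$. For each $\Xi\in\fI_1$ with $\Xi\leq\Phi^\perp$, the product $\Xi\Sigma$ is idempotent (by commutativity) with $\|\Xi\Sigma\|\leq\|\Xi\|\,\|\Sigma\|\leq1$, so $\Xi\Sigma\in\fI_1$; and using commutativity with $\Xi\Phi^\perp=\Xi$ and $\Sigma^2=\Sigma$ one checks that $(\Xi\Sigma)(\Phi^\perp\Sigma)=\Xi\Phi^\perp\Sigma=\Xi\Sigma$, that is, $\Xi\Sigma\leq\Psi^\perp$. Hence $\Xi\Sigma\in\cl N_1(\Psi)$, so the supremum above is at most $\sup_{\Theta\in\cl N_1(\Psi)}\|\Theta(T)\|=\alpha(T,\ran\Psi)$ by Corollary~\ref{cor:alpha_range}. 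Combining the two displays yields $\|\Psi^\perp(T)\|\leq\lambda(\Phi)\,\alpha(T,\ran\Psi)$ for every $T\in\B(H)$, which is exactly the statement that $\Psi\in\fH$ with $\lambda(\Psi)\leq\lambda(\Phi)$. I expect no serious obstacle: the only points requiring care are the routine lattice identities verifying that $\Xi\Sigma$ is a contractive idempotent lying below $\Psi^\perp$, and keeping straight the order-reversing dictionary between the $\fH$-condition and Corollary~\ref{cor:alpha_range}.
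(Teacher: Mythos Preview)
Your proposal is correct and follows essentially the same approach as the paper: apply the $\fH$-inequality for $\Phi$ to the operator $\Sigma(T)$, express $\alpha(\Sigma(T),\ran\Phi)$ via Corollary~\ref{cor:alpha_range}, and observe that each $\Xi\Sigma$ with $\Xi\in\cl N_1(\Phi)$ lands in $\cl N_1(\Psi)$. The only difference is cosmetic---you phrase membership in $\cl N_1(\cdot)$ via the order relation $\Theta\leq(\cdot)^\perp$, and you spell out the lattice bookkeeping in more detail than the paper does.
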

\begin{proof}
Note that $\Psi = \Sigma^\perp + \Phi\Sigma$. 
  Thus, if~$\Theta\in \cl N_1(\Phi)$, then $\Theta\Sigma\in
  \cl N_1(\Psi)$. Let $T\in \B(H)$; by Corollary \ref{cor:alpha_range}, 
  \begin{align*}
    \|\Psi^\perp(T)\|
    & = 
    \|\Phi^\perp(\Sigma(T))\| 
    \leq \lambda(\Phi)\,\alpha(\Sigma(T),\ran\Phi)\\
    & =  
    \lambda(\Phi)\,\sup_{\Theta\in \cl N_1(\Phi)} \|\Theta\Sigma(T)\| 
    \leq   \lambda(\Phi)\,\sup_{\Lambda\in \cl N_1(\Psi)} \|\Lambda(T)\|\\
    & =  
    \lambda(\Phi)\,\alpha(T,\ran\Psi).      \qedhere
  \end{align*}
\end{proof}

\begin{theorem}\label{est} 
  The set $\fH$ is a sublattice of~$\fI$.
\end{theorem}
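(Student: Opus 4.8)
The plan is to show that $\fH$ is closed under the two lattice operations $\vee$ and $\wedge$. I would treat $\vee$ first and then deduce closure under $\wedge$ from it, since the two cases are not symmetric: because $\alpha$ is order-reversing and $\ran(\Phi\wedge\Psi)\subseteq\ran\Phi,\ran\Psi\subseteq\ran(\Phi\vee\Psi)$, the Arveson distance to $\ran(\Phi\wedge\Psi)$ dominates all the others, which makes a crude triangle-inequality bound succeed for meets but fail for joins. Throughout I would use two facts recorded earlier: all Schur multipliers commute, and a product of two commuting contractive idempotents is again a contractive idempotent, so $\fI_1$ is closed under composition; together with Corollary~\ref{cor:alpha_range}, which expresses $\alpha(T,\ran\Phi)$ as $\sup_{\Theta\in\cl N_1(\Phi)}\|\Theta(T)\|$.

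For closure under $\vee$, let $\Phi,\Psi\in\fH$ and write $(\Phi\vee\Psi)^\perp=\Phi^\perp\Psi^\perp$. The idea is to iterate the one-step estimate used in the proof of Lemma~\ref{lemma:complement}, peeling off $\Phi^\perp$ and then $\Psi^\perp$. First, applying the defining inequality of $\fH$ (Definition~\ref{es}) to $\Phi$ at the operator $\Psi^\perp(T)$ and then Corollary~\ref{cor:alpha_range},
\[
\|\Phi^\perp\Psi^\perp(T)\|\le\lambda(\Phi)\,\alpha(\Psi^\perp(T),\ran\Phi)=\lambda(\Phi)\sup_{\Theta\in\cl N_1(\Phi)}\|\Theta\Psi^\perp(T)\|.
\]
For each fixed $\Theta\in\cl N_1(\Phi)\subseteq\fI_1$, commutativity gives $\Theta\Psi^\perp(T)=\Psi^\perp(\Theta(T))$, so applying the defining inequality of $\fH$ to $\Psi$ at $\Theta(T)$ yields $\|\Theta\Psi^\perp(T)\|\le\lambda(\Psi)\sup_{\Xi\in\cl N_1(\Psi)}\|\Xi\Theta(T)\|$. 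The crucial point is that $\Xi,\Theta\in\fI_1$, so $\Xi\Theta\in\fI_1$, and a direct check using $\Theta\Phi=0=\Xi\Psi$ shows $\Xi\Theta(\Phi\vee\Psi)=0$; hence $\Xi\Theta\in\cl N_1(\Phi\vee\Psi)$ and $\|\Xi\Theta(T)\|\le\alpha(T,\ran(\Phi\vee\Psi))$. Combining the two estimates gives $\|(\Phi\vee\Psi)^\perp(T)\|\le\lambda(\Phi)\lambda(\Psi)\,\alpha(T,\ran(\Phi\vee\Psi))$, so $\Phi\vee\Psi\in\fH$ with $\lambda(\Phi\vee\Psi)\le\lambda(\Phi)\lambda(\Psi)$.

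For closure under $\wedge$, I would reduce to the case just proved. By De Morgan, $(\Phi\wedge\Psi)^\perp=\Phi^\perp\vee\Psi^\perp=\Phi^\perp+\Psi^\perp-(\Phi\vee\Psi)^\perp$, so by the triangle inequality
\[
\|(\Phi\wedge\Psi)^\perp(T)\|\le\|\Phi^\perp(T)\|+\|\Psi^\perp(T)\|+\|(\Phi\vee\Psi)^\perp(T)\|.
\]
Each term is controlled by the defining inequality of $\fH$ for $\Phi$, $\Psi$ and $\Phi\vee\Psi$ (the last now being available), bounding the right-hand side by $\lambda(\Phi)\alpha(T,\ran\Phi)+\lambda(\Psi)\alpha(T,\ran\Psi)+\lambda(\Phi\vee\Psi)\alpha(T,\ran(\Phi\vee\Psi))$. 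Since $\ran(\Phi\wedge\Psi)$ is contained in each of $\ran\Phi$, $\ran\Psi$ and $\ran(\Phi\vee\Psi)$, order-reversal of $\alpha$ lets me replace every $\alpha(T,\cdot)$ on the right by $\alpha(T,\ran(\Phi\wedge\Psi))$, giving $\Phi\wedge\Psi\in\fH$.

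The main obstacle is the join step, and specifically the fact that $\Phi$ and $\Psi$ need not be contractive. This is exactly what prevents a one-line argument: one cannot simply feed $\Phi^\perp$ or $\Psi^\perp$ into $\cl N_1$, because the test maps occurring there must lie in $\fI_1$. The two-fold peeling is designed so that the only compositions ever formed, $\Xi\Theta$, are products of maps already known to be contractive, and so remain in $\fI_1$; this is the same mechanism as in Lemma~\ref{lemma:complement}, where one factor is contractive by hypothesis and a single peeling suffices. Once closure under $\vee$ is in hand, the meet case is routine.
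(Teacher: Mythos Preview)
Your proof is correct. For the join $\Phi\vee\Psi$ you do exactly what the paper does: the paper invokes Lemma~\ref{lemma:complement} to pass from $\|\Phi_2^\perp\Sigma(T)\|$ to $\lambda_2\,\alpha(T,\ran(\Sigma^\perp+\Phi_2\Sigma))$ and then uses the inclusion $\ran(\Phi_1\vee\Phi_2)\subseteq\ran(\Sigma^\perp+\Phi_2\Sigma)$, whereas you unpack that lemma inline and instead verify directly that $\Xi\Theta\in\cl N_1(\Phi\vee\Psi)$. These are the same argument in different clothing, and both yield $\lambda(\Phi\vee\Psi)\le\lambda(\Phi)\lambda(\Psi)$.

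For the meet the two arguments genuinely differ. The paper handles $\Phi_1\wedge\Phi_2$ first, by the simple estimate
\[
\|T-\Phi_1\Phi_2(T)\|\le\|T-\Phi_1(T)\|+\|\Phi_1\|\,\|T-\Phi_2(T)\|\le(\lambda_1+\|\Phi_1\|\lambda_2)\,\alpha(T,\ran(\Phi_1\Phi_2)),
\]
never using the join case. You instead deduce the meet from the already-established join via De~Morgan, obtaining the bound $\lambda(\Phi)+\lambda(\Psi)+\lambda(\Phi)\lambda(\Psi)$. Your route has the mild advantage that the resulting constant depends only on the $\lambda$-constants and not on the operator norm $\|\Phi\|$ (which can be large for noncontractive Schur idempotents); the paper's route is shorter and self-contained. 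Either order of treating $\vee$ and $\wedge$ works; your observation that the asymmetry forces $\vee$ to carry the real content is exactly right.
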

\begin{proof}  
  Let $\Phi_1, \Phi_2 \in\fH$ and write~$\lambda_i=\lambda(\Phi_i)$
  and $\X_i=\ran \Phi_i$, $i = 1,2$. Set $\X \deq \X_1\cap\X_2=\ran(\Phi_1\Phi_2)$.
  For $T\in\B(H)$, we have
  \begin{align*} 
    \|T-\Phi_1 \Phi_2(T) \| &\leq \|T-\Phi_1(T)\|+\|\Phi_1(T) -\Phi_1 \Phi_2(T)\|
    \\&\leq \|T-\Phi_1(T)\|+\|\Phi_1\|\|T -\Phi_2(T)\|
    \\&\leq \lambda_1 \alpha(T,\X_1) + \lambda_2\|\Phi_1\|
    \alpha(T,\X_2).
  \end{align*} 
By the monotonicity of $\alpha$, we have   
  $\alpha(T,\X_i)\leq\alpha(T,\X)$, $i=1,2$.  Thus,
  \[ \|T-\Phi_1 \Phi_2(T) \| \leq(\lambda_1+ \lambda_2\|\Phi_1\|
  )\alpha(T,\X). \] 
  It follows that $\Phi_1\Phi_2\in \fH$.
  
  Now let $\W \deq \ran(\Phi_1\vee\Phi_2) = \X_1+\X_2$ and $T\in
  \B(H)$. Using Lemma~\ref{lemma:complement} and the fact that 
  $\W\subseteq\ran(\Sigma^\perp+\Phi_2\Sigma)$ for $\Sigma\in
  \cl N_1(\Phi_1)$, we have
  \begin{eqnarray*}
    \|(\Phi_1\vee\Phi_2)^\perp(T) \|
    & = & \|\Phi_1^\perp(\Phi_2^\perp(T))\| \leq \lambda_1\, \alpha(\Phi_2^\perp(T),\ran \Phi_1)\\
    & = & \lambda_1\, \sup_{\Sigma\in \cl N_1(\Phi_1)} \|\Phi_2^\perp \Sigma(T)\|\\
    & \leq & \lambda_1\lambda_2\, \sup_{\Sigma\in \cl N_1(\Phi_1)} \alpha(T,\ran(\Sigma^\perp+\Phi_2\Sigma))\\
    & \leq & \lambda_1\lambda_2\,\alpha(T,\W).
  \end{eqnarray*}
It follows that $\Phi_1\vee\Phi_2\in\fH$ and $\lambda(\Phi_1\vee\Phi_2)\leq \lambda_1\lambda_2$.
\end{proof}

Recall that a weak* closed masa-bimodule $\cl M$ is called \emph{ternary}, if it is a 
ternary ring of operators, that is, if $ST^*R\in \cl M$ whenever $S,T,R\in \cl M$
(see {\it e.g.}~\cite{blm}).

\begin{proposition}\label{1.3} 
If $\Phi$ is a contractive Schur idempotent then $\Phi\in \fH$ and $\lambda(\Phi)\leq 2$.
\end{proposition}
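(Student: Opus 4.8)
The plan is to establish the operator inequality $\nor{\Phi^\perp(T)}\le 2\,\alpha(T,\ran\Phi)$ for every $T\in\cl B(H)$, which by Definition~\ref{es} exhibits $\Phi\in\fH$ with $\lambda(\Phi)\le 2$. Since $\Phi\in\fI_1$, I would first fix a representation $\Phi(T)=\sum_{i=1}^\infty Q_iTP_i$ as in~\eqref{eq_form1}, with $(Q_i)$ and $(P_i)$ mutually orthogonal families of projections in $\cl D$, and record the complementary projections $Q_\infty=I-\sum_iQ_i$, $P_\infty=I-\sum_iP_i$. Writing $R_0=Q_\infty,\ R_i=Q_i$ and $C_0=P_\infty,\ C_i=P_i$ for $i\ge1$ gives two orthogonal resolutions of the identity and $\Phi^\perp(T)=T-\Phi(T)=\sum_{(i,j)\colon\,\neg(i=j\ge1)}R_iTC_j$; that is, $\Phi^\perp(T)$ is $T$ with its diagonal blocks $Q_iTP_i$ deleted. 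The observation I would record here is that for index sets $S,U\subseteq\{0,1,2,\dots\}$ with $S\cap U\cap\{1,2,\dots\}=\emptyset$ the pair $\bigl(\sum_{i\in S}R_i,\ \sum_{j\in U}C_j\bigr)$ annihilates $\ran\Phi$, so by Remark~\ref{1.2} the corresponding compression of $T$ has norm at most $\alpha(T,\ran\Phi)$.

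The heart of the argument is a randomised two-colouring. Let $(\epsilon_k)_{k\ge1}$ be independent signs, each $+$ or $-$ with probability $\tfrac12$, used to colour \emph{both} the row index $k$ and the column index $k$; colour the complementary row index $0$ and column index $0$ by two \emph{further, independent} signs, giving a row-colour $\rho$ and a column-colour $\gamma$. For each colouring put
\[
\Xi = \sum_{\rho(i)\ne\gamma(j)} R_iTC_j ,
\]
the cross part of $T$. With respect to the orthogonal splittings of the range and the domain of $H$ into the two colour classes, $\Xi$ is a purely off-diagonal $2\times2$ operator matrix, so $\nor\Xi$ equals the maximum of the norms of its two corners; each corner is a compression of $T$ by a pair of projections with no common index in $\{1,2,\dots\}$, whence $\nor\Xi\le\alpha(T,\ran\Phi)$ by the observation above. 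On the other hand a cell $(i,j)$ is oppositely coloured with probability $\tfrac12$ when $(i,j)\ne(k,k)$ for $k\ge1$ and with probability $0$ when $i=j\ge1$, the decoupling of the two complementary signs being exactly what gives the corner cell $(0,0)$ the correct probability $\tfrac12$. Taking expectations of the sesquilinear forms then yields $\bb E\,\sca{\Xi\xi,\eta}=\tfrac12\sca{\Phi^\perp(T)\xi,\eta}$ for all unit vectors $\xi,\eta$, and combining this with $|\sca{\Xi\xi,\eta}|\le\nor\Xi\le\alpha(T,\ran\Phi)$ gives $|\sca{\Phi^\perp(T)\xi,\eta}|\le 2\,\alpha(T,\ran\Phi)$; the supremum over $\xi,\eta$ finishes the proof.

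I expect two points to need care. The first, and main, obstacle is the correct handling of the complementary corner $Q_\infty\,\cdot\,P_\infty$: colouring the index $0$ symmetrically with the others over-counts the cell $(0,0)$ and yields only the constant $3$, so the independent colouring of the two complementary half-indices is essential, and must be justified hand in hand with the admissibility of the resulting corners. The second is the interchange of expectation with the merely weak*-convergent series defining $\Xi$; I would carry this out at the level of the scalar forms $\sca{\Xi\xi,\eta}$, where for each fixed colouring the series converges because $\Xi$ is bounded, applying Fubini after checking summability, or else first truncating to finitely many blocks and passing to the limit. Finally I would note that the constant is sharp: for the diagonal masa the \lq\lq all-ones'' family makes $\nor{\Phi^\perp(T)}/\alpha(T,\ran\Phi)$ tend to $2$, so no smaller constant is possible.
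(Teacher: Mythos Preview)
Your argument is correct and genuinely different from the paper's. The paper does not work with the block decomposition directly; instead it observes (via \cite{kp}) that $\M=\ran\Phi$ is a ternary masa-bimodule, embeds $\cl B(H)$ in the $(1,2)$-corner of $\cl B(H\oplus H)$, and applies a ready-made estimate for von Neumann algebras (\cite[Lemma~8.3]{dav}) to the linking algebra
\[
\A=\begin{pmatrix}[\M\M^*]^{-w^*}&\M\\ \M^*&[\M^*\M]^{-w^*}\end{pmatrix},
\]
obtaining $\|T-\Psi(T)\|\le 2\,\alpha(T,\A)$ for the conditional expectation $\Psi$ onto $\A$, and then reading off $\|\Phi^\perp(T)\|\le 2\,\alpha(T,\ran\Phi)$ from the corner.

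Your route is a randomised two-colouring of the block indices, with the key device of decoupling the row- and column-colours of the complementary index~$0$ so that the $(0,0)$ cell is selected with probability~$\tfrac12$ rather than~$1$. This is entirely self-contained: it uses nothing beyond Remark~\ref{1.2} and the representation~\eqref{eq_form1}, whereas the paper imports a nontrivial hyperreflexivity fact for von Neumann algebras. Your approach also makes the constant~$2$ transparent as a probabilistic~$1/\tfrac12$, and your sharpness remark (which the paper does not include) is correct. The paper's approach, by contrast, explains the~$2$ structurally: contractive Schur idempotents have ternary range, and ternary modules sit as corners of von Neumann algebras, for which the constant~$2$ is classical. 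The two technical cautions you flag---the independent colouring of the two complementary half-indices, and the passage to the limit via truncation and dominated convergence on the scalar forms---are exactly the points that need care, and both are handled correctly by your outline.
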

\begin{proof} 
By~\cite{kp}, the space $\M=\ran\Phi$ is a ternary masa bimodule. 
Consider the von Neumann algebra
  \[ \A= \left(\begin{array}{clr} [\M\M^*]^{-w^*} & \M\\ \M^* & [\M^*
      \M]^{-w^*}
\end{array}\right)
\subseteq \cl B(H\oplus H) \] and note that~$\D\oplus\D$ is a masa
in~$\B(H\oplus H)$, over which $\cl A$ is a bimodule.  
By~\cite[Lemma~8.3]{dav} there exists a contractive
idempotent $\cl D\oplus\cl D$-bimodule map
\[ \Psi : \cl B(H\oplus H)\rightarrow \cl B(H\oplus H) \] such that
$\A=\ran\Psi$ and 
\begin{equation}\label{ex1} 
  \|T-\Psi(T)\|  \leq 2\alpha(T,\A)\rtext{for all~$T\in \B(H\oplus H)$}.
 \end{equation}
 (Note that $\Psi$ is not necessarily a Schur idempotent on~$\B(H\oplus
 H)$ since it does not need to be weak*-continuous.)  Consider the isometry
\[ \theta\colon \B(H)\to \B(H\oplus H),\quad T\mapsto 
 \left(\begin{array}{clr} 0 & T \\ 0 & 0
\end{array}\right)\]
and observe that \[\Psi(\theta(T))=\theta( \Phi(T)),\rtext{for all $T\in
  \B(H)$}.\] Moreover, for~$T\in \B(H)$,
\begin{align*} 
  \alpha(\theta(T),\A) &= \sup_{\|\eta\|=\|\xi\|=1}\inf_{A\in\A}\|(\theta(T)-A)(\xi\oplus \eta)\| 
  \\& \leq
\sup_{\|\eta\|=\|\xi\|=1}\inf_{M\in\ran\Phi}\|(\theta(T)-\theta(M))(\xi\oplus \eta)\| \\&=
\sup_{\|\xi\|=1}\inf_{M\in\ran\Phi}\|(T- M)\xi\|=\alpha(T,\M). 
\end{align*}
By~(\ref{ex1}),
\[ \|\Phi^\perp(T)\|=\|T-\Phi(T)\|=\|\theta(T)-\Psi(\theta(T))\|\leq2
\alpha(\theta(T),\A)\leq 2\alpha(T,\ran \Phi).  \]
\end{proof}

  We write~$\fC = \fC(H)$ for the Boolean lattice generated by $\fI_1$ in $\fI$. 

\begin{corollary}\label{c_contains}
  $\fC\subseteq \fH$.
\end{corollary}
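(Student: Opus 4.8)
The plan is to exploit the structural results already in hand rather than to analyse any individual idempotent in $\fC$ directly. The three ingredients I would combine are: first, that $\fI_1\subseteq\fH$ (Proposition~\ref{1.3}); second, that $\Phi^\perp\in\fH$ for every $\Phi\in\fI_1$ (Remark~\ref{r_imm}), so that $\fH$ contains both the generators of $\fC$ and their complements; and third, that $\fH$ is a sublattice of $\fI$, hence closed under the binary operations $\wedge$ and $\vee$ (Theorem~\ref{est}), and therefore by induction under all finite meets and finite joins.

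The key observation is that I do \emph{not} need $\fH$ to be closed under complementation, a property I would not expect to have available. Instead I would appeal to the normal form present in any Boolean algebra: the Boolean sublattice generated by a set $S$ consists exactly of the finite joins of finite meets of \emph{literals}, where a literal is an element of $S$ or the complement of one. Indeed, the collection of all such disjunctive normal forms contains $S$ and is visibly closed under $\vee$; it is closed under $\wedge$ by distributing, and closed under complementation by De Morgan's laws followed by re-expansion into disjunctive form. Applying this with $S=\fI_1$, every $\Phi\in\fC$ can be written as
\[
\Phi \;=\; \bigvee_{j} \bigwedge_{i} L_{ij},
\qquad L_{ij}\in\fI_1\cup\{\Sigma^\perp:\Sigma\in\fI_1\},
\]
a finite join of finite meets.

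From here the conclusion would be immediate. Each literal $L_{ij}$ lies in $\fH$ by Proposition~\ref{1.3} and Remark~\ref{r_imm}; each inner meet $\bigwedge_i L_{ij}$ then lies in $\fH$ because $\fH$ is meet-closed; and finally the outer join lies in $\fH$ because $\fH$ is join-closed. Hence $\Phi\in\fH$, giving $\fC\subseteq\fH$.

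The only real subtlety, and the step I would be most careful about, is the abstract Boolean normal-form fact together with its hidden use of complementation. It is precisely because $\fH$ need not be a Boolean sublattice that I cannot simply assert ``$\fC$ is generated by $\fI_1\subseteq\fH$''; the argument genuinely relies on having pushed all complements down onto the generators, where Remark~\ref{r_imm} guarantees membership in $\fH$. Everything else is the purely formal manipulation of finite meets and joins inside the sublattice $\fH$.
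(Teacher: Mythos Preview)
Your proof is correct and follows essentially the same route as the paper's. The paper phrases the key step slightly differently---observing that the sublattice of $\fI$ generated by $\fI_1\cup\fI_1^\perp$ is already Boolean (by De~Morgan), hence equals $\fC$---whereas you spell out the same fact via disjunctive normal form; in both cases the conclusion then follows from Proposition~\ref{1.3}, Remark~\ref{r_imm}, and Theorem~\ref{est}.
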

\begin{proof}
  Let $\fI_1^\perp =\{\Phi^\perp\colon \Phi\in \fI_1\}$. It is easy to see that the
  sublattice of~$\fI$ generated by~$\fI_1\cup \fI_1^\perp$ is Boolean 
  and hence it
coincides with $\fC$.  By Theorem~\ref{est}, Proposition \ref{1.3} 
  and Remark~\ref{r_imm}, $\fC\subseteq\fH$.
\end{proof}

\begin{question}
Does there exist a Schur idempotent whose range is not hyperreflexive?
In other words, is the second of the inclusions
\[ \fC\subseteq \fH\subseteq \fI\] 
strict?
If so, then this would imply that $\fC\ne \fI$, 
settling in the negative an open problem of several
  years' standing which asks whether the Boolean lattice generated by the contractive Schur idempotents
  exhausts all Schur idempotents.
\end{question}

We next show that a class of Schur idempotents,
studied by Varopolous~\cite{v} and by
Davidson and Donsig~\cite{dd}
(see also \cite{p_loc}), is contained in~$\fH$.  
Let $H = \ell^2$ and $\cl D$ be the masa of diagonal 
(with respect to the canonical basis) operators. 
A \emph{Schur bounded pattern}
\cite{dd} is a subset $\kappa\subseteq\bN\times\bN$ such that every
bounded function $\nph: \bN\times\bN\to\bC$ supported on $\kappa$ is a
Schur multiplier. If $\kappa$ is a Schur bounded pattern then the map $\Phi_{\kappa}$ of Schur
multiplication by the matrix $(a_{i,j})$, where $a_{i,j} = 1$ 
(resp. $a_{i,j} = 0$) if $(i,j)\in \kappa$ (resp. $(i,j)\not\in \kappa$) 
is a Schur idempotent. 

\begin{proposition}\label{p_sbp}
  If $\kappa\subseteq \bN\times\bN$ is a Schur bounded pattern, then
  $\Phi_\kappa\in \frak{H}$.
\end{proposition}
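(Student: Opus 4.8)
\emph{The plan} is to show that $\Phi_\kappa$ lies in the Boolean lattice $\fC$ generated by the contractive Schur idempotents, and then to invoke Corollary~\ref{c_contains}, which gives $\fC\subseteq\fH$. The bridge between the combinatorics of $\kappa$ and the lattice $\fC$ is the observation that Schur multiplication by characteristic functions of patterns interacts well with the lattice operations: since $\Phi_\kappa\Phi_{\kappa'}=\Phi_{\kappa\cap\kappa'}$ and $\chi_\kappa+\chi_{\kappa'}-\chi_{\kappa\cap\kappa'}=\chi_{\kappa\cup\kappa'}$ pointwise, we have $\Phi_\kappa\vee\Phi_{\kappa'}=\Phi_{\kappa\cup\kappa'}$. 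Hence, if $\kappa$ decomposes as a finite union $\kappa=\bigcup_{j=1}^n\kappa_j$, then $\Phi_\kappa=\Phi_{\kappa_1}\vee\dots\vee\Phi_{\kappa_n}$, and it suffices to arrange that each $\Phi_{\kappa_j}$ is a contractive Schur idempotent, for then $\Phi_\kappa$ is a finite join of elements of $\fI_1$ and therefore lies in $\fC$.

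First I would identify which patterns yield contractive idempotents. By the description of $\fI_1$ from \cite{kp} recorded in~(\ref{eq_form1}), $\Phi_{\kappa_j}\in\fI_1$ precisely when $\kappa_j$ has the block form $\kappa_j=\bigcup_i B_i\times A_i$, where the rows $(B_i)_i$ are mutually disjoint subsets of $\bN$ and the columns $(A_i)_i$ are mutually disjoint; in that case $\Phi_{\kappa_j}(T)=\sum_i Q_{B_i}TP_{A_i}$, with $P_{A_i},Q_{B_i}\in\cl P(\cl D)$ the coordinate projections. Such block patterns include single diagonals and, more generally, shifted diagonals, as the special case of singleton blocks.

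The substance of the proof---and the step I expect to be the main obstacle---is therefore the combinatorial assertion that a Schur bounded pattern admits such a decomposition: every Schur bounded $\kappa$ is a finite union of block patterns of the above type. This is exactly where the hypothesis of Schur boundedness must be used, and it is here that I would appeal to the structure theory of Schur bounded patterns developed by Varopoulos~\cite{v} and by Davidson and Donsig~\cite{dd}. Concretely, Schur boundedness forces a uniform bound on the Schur-multiplier norms of the bounded functions supported on $\kappa$ (by the closed graph theorem), and this in turn prevents $\kappa$ from containing arbitrarily large full rectangular grids, on which triangular truncation would produce Schur norms growing like $\log n$; the content of the cited work is to promote such finiteness into an explicit decomposition of $\kappa$ into finitely many pieces each having disjoint rows and disjoint columns. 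Granting this decomposition, the argument closes immediately: $\Phi_\kappa=\bigvee_{j=1}^n\Phi_{\kappa_j}\in\fC\subseteq\fH$ by Corollary~\ref{c_contains}, with the crude estimate $\lambda(\Phi_\kappa)\leq 2^{\,n}$ available from Proposition~\ref{1.3} and Theorem~\ref{est} should an explicit constant be wanted.
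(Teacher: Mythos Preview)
Your approach is correct and essentially identical to the paper's: both show that $\Phi_\kappa$ is a finite join of contractive Schur idempotents and then invoke $\fC\subseteq\fH$. The only refinement is that the Davidson--Donsig theorem you cite does not directly give a decomposition into block patterns, but rather $\kappa=R\cup C$ where every row of $R$ and every column of $C$ has at most $N$ entries; the paper then observes (and you should too) that such an $R$ splits as $R_1\cup\dots\cup R_N$ with each $R_k$ having at most one entry per row, and any such $R_k$ is automatically a block pattern $\bigcup_j B_j\times\{j\}$ with the $B_j$ disjoint, hence $\Phi_{R_k}\in\fI_1$.
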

\begin{proof}
  By \cite{dd}, there exist sets $R,C\subseteq\bN\times\bN$ and 
  a constant $N\in\bN$ such that:
  \begin{enumerate}
  \item$\{j\in\bN: (i,j)\in R\}$ has at most $N$ elements for each $i\in\bN$;
  \item$\{i\in\bN: (i,j)\in C\}$ has at most $N$ elements for each $j\in\bN$; and
  \item$\kappa= R\cup C$.
  \end{enumerate}
  We have $\Phi_\kappa=\Phi_R\vee \Phi_C$. 
  By Theorem~\ref{est} it suffices to show that
  $\Phi_R\in \fH$ and $\Phi_C\in \fH$. It is easily seen, however, 
   that $\ran\Phi_R$ is the sum of at most $N$ ternary masa-bimodules,
  so it is hyperreflexive by Corollary~\ref{c_contains}. Hence $\Phi_R\in
  \fH$, and similarly, $\Phi_C\in \fH$.
\end{proof}

\section{Hyperreflexivity and spans}\label{s_hs}

In this section, we show that, under certain conditions, 
hyperreflexivity is preserved under summation. The main results of the section are 
Theorem \ref{1.7} and the subsequent Corollary \ref{c_ahit}. 
The first step is the following lemma. 

\begin{lemma}\label{1.5}
  If $\U$ is a hyperreflexive masa-bimodule and $\Phi\in\fH$, then the
  algebraic sum $\U+\ran\Phi$ is hyperreflexive and
\[k(\U+\ran\Phi)\leq  k(\U)\lambda(\Phi).\]
\end{lemma}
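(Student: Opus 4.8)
Write $\V\deq\U+\ran\Phi$, a (not necessarily weak* closed) $\D$-bimodule; the goal is the distance estimate $d(T,\V)\le k(\U)\lambda(\Phi)\,\alpha(T,\V)$ for every $T\in\B(H)$. The plan is to exploit the canonical approximant $\Phi(T)\in\ran\Phi\subseteq\V$: since $\V$ is a linear subspace, translating by $\Phi(T)$ gives $d(T,\V)=d(\Phi^\perp(T),\V)$, and because $\U\subseteq\V$ this is bounded by $d(\Phi^\perp(T),\U)$. Hyperreflexivity of $\U$ then yields
\[ d(T,\V)\le d(\Phi^\perp(T),\U)\le k(\U)\,\alpha(\Phi^\perp(T),\U), \]
so it remains to establish the single inequality $\alpha(\Phi^\perp(T),\U)\le\lambda(\Phi)\,\alpha(T,\V)$, which is where the hypothesis $\Phi\in\fH$ must enter.

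Next I would expand the left-hand side by the second formula of Remark~\ref{1.2}: it suffices to bound $\|Q\Phi^\perp(T)P\|$ for an arbitrary pair $P,Q\in\P(\D)$ with $Q\U P=\{0\}$. As $\Phi^\perp$ is a $\D$-bimodule map and $P,Q\in\D$, we have $Q\Phi^\perp(T)P=\Phi^\perp(QTP)$, and $\Phi\in\fH$ gives $\|\Phi^\perp(QTP)\|\le\lambda(\Phi)\,\alpha(QTP,\ran\Phi)$. Everything therefore reduces to the claim
\[ \alpha(QTP,\ran\Phi)\le\alpha(T,\V)\qquad\text{whenever }Q\U P=\{0\}. \]

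For this claim, which is the crux of the argument, I would use Corollary~\ref{cor:alpha_range} to write $\alpha(QTP,\ran\Phi)=\sup_{\Theta\in\cl N_1(\Phi)}\|\Theta(QTP)\|$ and fix $\Theta\in\cl N_1(\Phi)$ with a representation $\Theta(S)=\sum_i E_iSF_i$ of the form~(\ref{eq_form1}). The key observation is that the compression $\Xi\deq Q\,\Theta(\cdot)\,P$ satisfies $\Xi(S)=\sum_i(QE_i)S(F_iP)$ with $(QE_i)_i$ and $(F_iP)_i$ again mutually orthogonal families in $\D$, so $\Xi\in\fI_1$, and that $\Xi$ annihilates $\V$: it kills $\ran\Phi$ since $\Theta\Phi=0$ forces $E_i\ran\Phi\,F_i=\{0\}$ for each $i$, and it kills $\U$ since $E_iUF_i\in\U$ and hence $(QE_i)U(F_iP)=Q(E_iUF_i)P\in Q\U P=\{0\}$ for every $U\in\U$. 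Given $\Xi(\V)=\{0\}$, the identity $\|\Xi(T)\|=\sup_i\|(QE_i)T(F_iP)\|$ (as in the proof of Proposition~\ref{p_crr}) together with Remark~\ref{1.2} yields $\|\Theta(QTP)\|=\|\Xi(T)\|\le\alpha(T,\V)$; taking the supremum over $\Theta$ proves the claim, and chaining the inequalities gives the assertion. The one place requiring care is the verification that $\Xi$ annihilates $\V$, i.e.\ recognising that the geometric condition $Q\U P=\{0\}$ passes to every compressed corner $Q(E_iUF_i)P$; once the idempotent $\Xi$ is written down, the rest is bookkeeping with the two distance formulas and the form~(\ref{eq_form1}).
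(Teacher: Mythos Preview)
Your proof is correct and follows essentially the same route as the paper: both reduce to bounding $\|Q\Phi^\perp(T)P\|$ for $Q\U P=\{0\}$ and then compare with $\alpha(T,\V)$. The only cosmetic difference is that the paper packages the step $\|Q\Phi^\perp(T)P\|\le\lambda(\Phi)\,\alpha(T,\V)$ by introducing the auxiliary idempotent $\Psi_{Q,P}=(\Phi^\perp\Sigma_{Q,P})^\perp$ and invoking Lemma~\ref{lemma:complement} (together with $\V\subseteq\ran\Psi_{Q,P}$), whereas you apply $\Phi\in\fH$ directly to $QTP$ and then verify $\alpha(QTP,\ran\Phi)\le\alpha(T,\V)$ by hand; your map $\Xi=\Theta\Sigma_{Q,P}$ is exactly the element of $\cl N_1(\Psi_{Q,P})$ that appears when one unwinds the proof of Lemma~\ref{lemma:complement}.
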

\begin{proof}
  By~\cite[Corollary 3.4]{eletod}, the algebraic sum 
  $\W=\U+\ran\Phi$ is weak* closed.
  Given projections~$P,Q\in \cl D$, let $\Sigma_{Q,P}$ be the (contractive) Schur idempotent
  given by $\Sigma_{Q,P}(T)=QTP$,
  and
  \[ \Psi_{Q,P} = \Sigma_{Q,P}^\perp + \Phi\Sigma_{Q,P} = (\Phi^\perp
  \Sigma_{Q,P})^\perp.\] 
Note that   
\[ \Psi_{Q,P}^\perp(T)= Q\Phi^\perp(T)P, \ \ \ T\in \cl B(H).\]
  By Lemma~\ref{lemma:complement}, $\Psi_{Q,P}\in \fH$ and 
  $\lambda(\Psi_{Q,P})\leq \lambda(\Phi)$.
Using Remark~\ref{1.2}, and writing $P,Q$ for projections in $\cl D$, we have
  \begin{eqnarray*}
    d(T,\U+\ran\Phi) 
    & = & \inf\{\|T - X - Y\| : X\in \cl U, Y\in \ran\Phi\}\\
    & \leq & \inf\{\|T - X - \Phi(T)\| : X\in \cl U\}\\ 
    & = & d(\Phi^\perp(T),\U)\\
    & \leq & k(\U)\,\alpha(\Phi^\perp(T),\U)\\
    & = & k(\U) \sup\{\|Q\Phi^\perp(T)P\| :  Q\U P=\{0\}\}\\
    & = & k(\U) \sup\{\|\Psi_{Q,P}^\perp(T)\| :  Q\U P=\{0\}\}\\
    & \leq & k(\U) \sup\{\lambda(\Psi_{Q,P})\, \alpha(T,\ran\Psi_{Q,P}) :  Q\U P=\{0\}\}\\
    & \leq & k(\U)\,\lambda(\Phi)\,\alpha(T,\U+\ran\Phi),
  \end{eqnarray*}
  since, if~$Q\U P=\{0\}$, then $\Psi_{Q,P}^\perp(\U+\ran\Phi)=\{0\}$,
  and hence $\U+\ran\Phi\subseteq \ran\Psi_{Q,P}$.
\end{proof}

\begin{corollary} 
  If\/ $\Phi\in \fI_1$ and\/ $\U$ is a hyperreflexive masa-bimodule, then
  the algebraic sum $\W=\U+\ran\Phi^{\perp}$ is hyperreflexive and
  $k(\W)\leq k(\U)$.
\end{corollary}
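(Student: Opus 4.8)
The plan is to derive this immediately from Lemma~\ref{1.5}, choosing the right Schur idempotent to feed into it. That lemma is stated for an arbitrary $\Phi\in\fH$, and yields hyperreflexivity of $\U+\ran\Phi$ with $k(\U+\ran\Phi)\leq k(\U)\lambda(\Phi)$. Here, however, the summand is $\ran\Phi^\perp$ rather than $\ran\Phi$, so the natural move is to apply the lemma not to $\Phi$ itself but to its complement $\Phi^\perp$.

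First I would invoke Remark~\ref{r_imm}, which records that for $\Phi\in\fI_1$ one has $\Phi^\perp\in\fH$ with, crucially, $\lambda(\Phi^\perp)=1$. This is precisely the hypothesis needed to run Lemma~\ref{1.5} on the Schur idempotent $\Phi^\perp$. Applying that lemma with $\Phi^\perp$ playing the role of the lemma's ``$\Phi$'', I conclude that $\W=\U+\ran\Phi^\perp$ is a (weak* closed, algebraic) sum which is hyperreflexive, with
\[ k(\W)\leq k(\U)\,\lambda(\Phi^\perp)=k(\U). \]

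There is no genuine obstacle here: the entire content sits in the preceding lemma and remark, and the corollary is really a specialisation of Lemma~\ref{1.5} to the case of a complemented contractive idempotent. The only point requiring care is the bookkeeping of complements, namely ensuring that the Schur idempotent whose range appears in $\W$ is exactly the one (here $\Phi^\perp$) shown to lie in $\fH$ with constant $\lambda=1$, so that the factor $\lambda(\Phi^\perp)$ collapses and the bound reduces to the clean estimate $k(\W)\leq k(\U)$.
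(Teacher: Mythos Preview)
Your proposal is correct and matches the paper's own proof exactly: the paper simply says the corollary is immediate from Remark~\ref{r_imm} and Lemma~\ref{1.5}, which is precisely the route you describe (apply Lemma~\ref{1.5} to $\Phi^\perp$ and use $\lambda(\Phi^\perp)=1$).
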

\begin{proof}
Immediate from Remark~\ref{r_imm} and Lemma \ref{1.5}.
\end{proof}

\begin{lemma}\label{1.6} 
  Let $\U_n$, $n\in \bb{N}$, be hyperreflexive spaces, such that 
$\cl U_{n+1}\subseteq \cl U_n$ for each $n\in \bb{N}$ and   
  $\sup_n k(\U_n)<\infty$. Then the space $\U \deq\bigcap_n\U_n$ is
  hyperreflexive and $k(\U)\leq\limsup_n k(\U_n)$.
\end{lemma}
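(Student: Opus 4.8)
The plan is to construct, for each $T\in\B(H)$, a single element of $\U$ that approximately witnesses the estimate $\bigl(\limsup_n k(\U_n)\bigr)\,\alpha(T,\U)$, obtained as a weak* limit of near-optimal approximants drawn from the individual spaces $\U_n$. The point to appreciate at the outset is that one cannot simply compare $d(T,\U)$ with the distances $d(T,\U_n)$: since $\U\subseteq\U_n$ and both $d$ and $\alpha$ are order-reversing in the second variable, we have $d(T,\U_n)\leq d(T,\U)$ and $\alpha(T,\U_n)\leq\alpha(T,\U)$, so the hyperreflexivity of $\U_n$ only bounds $d(T,\U_n)$ from above, which is the wrong quantity. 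The individual approximants must therefore be assembled into a single approximant that lies in $\U$.

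First I would observe that $\U=\bigcap_n\U_n$ is weak* closed, being an intersection of weak* closed subspaces, so only the distance estimate needs checking. Write $K=\limsup_n k(\U_n)$ and fix $T\in\B(H)$ and $\epsilon,\delta>0$. Choosing $N$ with $k(\U_n)<K+\delta$ for all $n\geq N$, I would use the hyperreflexivity of each $\U_n$ together with $\alpha(T,\U_n)\leq\alpha(T,\U)$ to pick, for $n\geq N$, an element $X_n\in\U_n$ with
\[\|T-X_n\|\leq d(T,\U_n)+\epsilon\leq k(\U_n)\,\alpha(T,\U_n)+\epsilon\leq(K+\delta)\,\alpha(T,\U)+\epsilon.\]
In particular the sequence $(X_n)_{n\geq N}$ is norm-bounded, which is exactly where the hypothesis $\sup_n k(\U_n)<\infty$ enters.

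Next I would invoke weak* compactness: since $H$ is separable, $\cl T(H)$ is norm-separable, so bounded subsets of $\B(H)=\cl T(H)^*$ are weak* compact and metrizable, and $(X_n)$ therefore admits a weak*-convergent subsequence $X_{n_j}\to X$. For each fixed $m$, the tail $\{X_{n_j}:n_j\geq m\}$ lies in $\U_{n_j}\subseteq\U_m$ by the nesting hypothesis, and $\U_m$ is weak* closed, so $X\in\U_m$; as this holds for every $m$, we obtain $X\in\U$. Finally, using that the norm on $\B(H)$ is weak* lower semicontinuous, being the supremum of the weak* continuous functionals $S\mapsto|\langle S,\omega\rangle|$ over $\|\omega\|_1\leq1$, I would conclude
\[d(T,\U)\leq\|T-X\|\leq\liminf_j\|T-X_{n_j}\|\leq(K+\delta)\,\alpha(T,\U)+\epsilon.\]
Letting $\epsilon,\delta\to0$ yields $d(T,\U)\leq K\,\alpha(T,\U)$, which is the desired hyperreflexivity with $k(\U)\leq\limsup_n k(\U_n)$. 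The main obstacle is precisely this limiting step: it is what forces the use of separability (to extract a convergent subsequence rather than merely a subnet), of the nesting together with weak* closedness (to place the limit in every $\U_m$), and of lower semicontinuity of the norm (to transfer the uniform distance bound to the limit).
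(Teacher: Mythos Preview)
Your proof is correct and follows essentially the same route as the paper: choose near-optimal approximants $X_n\in\U_n$, extract a weak*-convergent subsequence by boundedness, use the nesting and weak* closedness to place the limit in $\U$, and apply weak* lower semicontinuity of the norm to conclude. If anything, you are more explicit than the paper in justifying why the limit lies in $\U$ and why separability allows a subsequence rather than a subnet.
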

\begin{proof} Let $T\in\B(H)$.  
  Since \[ d(T, \U_n)\leq k(\U_n)\,\alpha(T,\U_n), \] 
  there exists $S_n \in\U_n$ such that 
  \[
  \|T-S_n\|<k(\U_n)\alpha(T,\U_n) +\frac{1}{n}, \ \ \ n\in \bb{N}. \] 
 Since $\alpha(T,\U_n)\leq\alpha(T,\U)$, $n\in\bb{N}$, and $\sup_n k(\U_n)<\infty$, the
  sequence $(S_n)_{n\in \bb{N}}$ is bounded, and hence after passing to a subsequence
  we may assume that~$(S_n)$ converges in the weak* topology to some
  operator~$S$.  Writing $k = \limsup_n k(\U_n)$, we have
  \[ \|T-S\|\leq\limsup_n\|T-S_n\|\leq k \limsup_n\alpha(T,\U_n). \]
We thus conclude that 
\[ d(T,\U)\leq\|T-S\|\leq\limsup_n k(\U_n)\, \alpha(T,\U).\qedhere \]
\end{proof}

We next introduce a hyperreflexivity analogue of 
approximately $\fI$-injective masa-bimodules defined in \cite{eletod}.
  Let us say that a uniformly bounded sequence $(\Phi_n)_{n\in \bb{N}}\subseteq \fI$
  decreases to a subspace $\V\subseteq \cl B(H)$ 
  if $\Phi_1\geq \Phi_2\geq \dots$ and $\V=\bigcap_n
  \ran\Phi_n$. Recall~\cite{eletod} that in this case, the masa bimodule~$\V$ is
  said to be \emph{approximately $\fI$-injective}.

\begin{definition}
  A masa-bimodule $\V\subseteq \cl B(H)$ will be called \emph{approximately $H$-injective} if~there is
  a uniformly bounded sequence $(\Phi_n)_{n\in\bN}$ which decreases to~$\V$ such that
  \[\Phi_n\in\fH \text{ for each } n\in \bb{N} \mbox{ and } \sup_{n\in\bN} \lambda(\Phi_n) <
  \infty.\] The greatest lower bound of 
  the possible values of the latter supremum will be denoted by~$\lambda_H(\V)$.
\end{definition}

\begin{theorem}\label{1.7} 
  If $\V$ is an approximately H-injective masa-bimodule and $\U$ is a
  hyperreflexive masa-bimodule, then the algebraic sum $\U+\V$ is
  hyperreflexive and \[k(\U+\V)\leq k(\U)\,\lambda_H(\V).\]
\end{theorem}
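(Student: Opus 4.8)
The plan is to realise $\U+\V$ as a decreasing intersection of the spaces $\W_n\deq\U+\ran\Phi_n$, to which Lemma~\ref{1.5} and Lemma~\ref{1.6} both apply. First I would fix $\epsilon>0$ and, using the definition of $\lambda_H(\V)$, choose a uniformly bounded sequence $(\Phi_n)_{n\in\bN}\subseteq\fH$ decreasing to $\V$ with $\sup_n\lambda(\Phi_n)\leq\lambda_H(\V)+\epsilon$. Since $\Phi_{n+1}\leq\Phi_n$ forces $\ran\Phi_{n+1}\subseteq\ran\Phi_n$, the spaces $\W_n$ are weak* closed (Lemma~\ref{1.5}), decreasing, and hyperreflexive with $k(\W_n)\leq k(\U)\lambda(\Phi_n)\leq k(\U)(\lambda_H(\V)+\epsilon)$. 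Lemma~\ref{1.6} then shows that $\Y\deq\bigcap_n\W_n$ is hyperreflexive with $k(\Y)\leq\limsup_n k(\W_n)\leq k(\U)(\lambda_H(\V)+\epsilon)$.

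The crux is the identity $\Y=\U+\V$; granting it, the theorem follows at once, since $\U+\V$ is then weak* closed and independent of the chosen sequence, so that letting $\epsilon\to 0$ gives $k(\U+\V)=k(\Y)\leq k(\U)\lambda_H(\V)$. The inclusion $\U+\V\subseteq\Y$ is immediate from $\V\subseteq\ran\Phi_n$. For the reverse inclusion I would fix $T\in\Y$; since $T\in\W_n$ and $\W_n$ is weak* closed, Remark~\ref{1.2} gives $\alpha(T,\W_n)=0$. Following the proof of Lemma~\ref{1.5}, I would then produce for each $n$ an approximant of the canonical form $S_n=X_n+\Phi_n(T)$ with $X_n\in\U$, satisfying $\|T-S_n\|=\|\Phi_n^\perp(T)-X_n\|<d(\Phi_n^\perp(T),\U)+\tfrac1n\leq k(\U)\lambda(\Phi_n)\alpha(T,\W_n)+\tfrac1n=\tfrac1n$, so that $S_n\to T$ in norm.

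The key point is that the range part of this approximant is the canonical operator $\Phi_n(T)$, which is uniformly bounded because $\sup_n\|\Phi_n\|<\infty$; consequently $X_n=S_n-\Phi_n(T)$ is bounded as well. Using that $H$ is separable, I would pass to a subsequence along which $\Phi_n(T)\to V$ and $X_n\to X$ in the weak* topology. Here $X\in\U$ since $\U$ is weak* closed, while $V\in\V=\bigcap_m\ran\Phi_m$: for each fixed $m$ the tail $(\Phi_n(T))_{n\geq m}$ lies in the weak* closed space $\ran\Phi_m$, so its limit does too. Passing to the weak* limit in $S_n=X_n+\Phi_n(T)$ and comparing with $S_n\to T$ yields $T=X+V\in\U+\V$, completing the identity.

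The main obstacle is precisely this inclusion $\Y\subseteq\U+\V$: an arbitrary decomposition $T=U_n+R_n$ with $R_n\in\ran\Phi_n$ need not have $\|U_n\|$ or $\|R_n\|$ bounded, so one cannot extract weak* limits of the two summands separately. The resolution is to insist on the canonical decomposition furnished by the idempotent $\Phi_n$, whose uniform boundedness is exactly the hypothesis built into approximate $H$-injectivity; this is what makes the separate weak* limits---and hence the splitting of $T$ into a $\U$-part and a $\V$-part---possible.
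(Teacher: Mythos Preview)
Your argument is correct and follows the paper's route exactly: bound $k(\U+\ran\Phi_n)$ via Lemma~\ref{1.5}, apply Lemma~\ref{1.6} to the decreasing intersection $\bigcap_n(\U+\ran\Phi_n)$, and identify this intersection with $\U+\V$. The only difference is that the paper delegates the identity $\bigcap_n(\U+\ran\Phi_n)=\U+\V$ (and the weak* closedness of $\U+\V$) to results from~\cite{eletod}, whereas you supply a self-contained proof via the canonical splitting $T=\Phi_n(T)+\Phi_n^\perp(T)$ and weak* compactness; your remark that the uniform boundedness of $(\Phi_n)$ is exactly what permits extracting the separate limits is precisely the mechanism behind the cited result.
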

\begin{proof} 
  Let $(\Phi_n)_{n\in\bN}$ be a uniformly bounded sequence in~$\fH$ decreasing to~$\V$
  with $\lambda \deq \sup_n \lambda(\Phi_n) < \infty$. 
  By Lemma~\ref{1.5},
  \[ k(\U+\ran \Phi_n)\leq k(\U)\,\lambda(\Phi_n),\] so $\sup_n
  k(\U+\ran \Phi_n)<\infty$. 
By~\cite[Corollary 3.4]{eletod}, the space 
$\U+\V$ is weak* closed, and by the proof of \cite[Theorem 2.5]{eletod}, 
  $$\U+\V=\U+\bigcap_n\ran \Phi_n=\bigcap_n (\U+\ran \Phi_n).$$ 
  By Lemma~\ref{1.6}, $\U+\V$ is hyperreflexive and
$$    k(\U+\V)
    =  k\Big(\bigcap_n(\U+\ran \Phi_n)\Big) \leq \limsup_n k(\U+\ran \Phi_n)
    \leq \lambda k(\U).$$
Taking an infimum over all possible values of $\lambda$, we obtain $k(\U+\V)\leq k(\U)\, \lambda_H(\V)$.
\end{proof}

\begin{corollary}\label{c_ahit}
  If $\U$ is a hyperreflexive masa-bimodule and $\M$ is a ternary
  masa-bimodule, then\/ $\U+\M$ is hyperreflexive and $k(\U+\M)\leq
  2k(\U)$.
\end{corollary}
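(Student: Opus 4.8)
The plan is to derive the result directly from Theorem~\ref{1.7}: it suffices to show that every ternary masa-bimodule $\M$ is approximately $H$-injective with $\lambda_H(\M)\leq 2$, for then Theorem~\ref{1.7} gives at once that $\U+\M$ is hyperreflexive with $k(\U+\M)\leq k(\U)\,\lambda_H(\M)\leq 2k(\U)$, which is exactly the claim. Thus the entire problem is reduced to producing a suitable witnessing sequence of Schur idempotents for $\M$.

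To this end I would realise $\M$ as a decreasing intersection $\M=\bigcap_n\ran\Phi_n$ of ranges of \emph{contractive} Schur idempotents $\Phi_n\in\fI_1$ with $\Phi_1\geq\Phi_2\geq\cdots$. In the atomic case, where $\cl D$ is the diagonal masa, the ternary condition $ST^*R\in\M$ forces the support of $\M$ to split as a disjoint union of combinatorial rectangles whose row-sets and column-sets are respectively pairwise disjoint; consequently $\M=\ran\Phi$ for a single contractive idempotent of the form $\Phi(T)=\sum_\alpha Q_\alpha T P_\alpha$ with $(Q_\alpha)$ and $(P_\alpha)$ orthogonal families in $\cl D$, and one may take the constant sequence $\Phi_n=\Phi$. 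In general a single contractive idempotent will not suffice --- already the graph of a measurable bijection of the spectrum of $\cl D$ is a ternary support that is not a countable union of rectangles --- which is exactly why the limiting device of Theorem~\ref{1.7} is genuinely needed. Here I would instead approximate the support of $\M$ from above by supports of contractive idempotents arising from progressively finer measurable partitions of the spectrum, using that ternary masa-bimodules are approximately $\fI$-injective (as in \cite{eletod}) and refining the approximating idempotents so that they are built from orthogonal families of projections and hence contractive.

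Granting such a sequence, the remaining steps are routine. By Proposition~\ref{1.3} each $\Phi_n$ lies in $\fH$ with $\lambda(\Phi_n)\leq 2$; the sequence is uniformly bounded since $\|\Phi_n\|\leq 1$; and $\sup_n\lambda(\Phi_n)\leq 2$. Hence $\M$ is approximately $H$-injective with $\lambda_H(\M)\leq 2$, and Theorem~\ref{1.7} finishes the argument with the stated constant.

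The step I expect to be the main obstacle is the structural one: showing that a general, possibly non-atomic, ternary masa-bimodule is a decreasing intersection of ranges of contractive Schur idempotents rather than merely uniformly bounded ones. The delicate point is the measure-theoretic construction guaranteeing that the approximating rectangle-decompositions use genuinely orthogonal families of projections --- so that the idempotents $\Phi_n$ are contractive and Proposition~\ref{1.3} applies with constant $2$ --- while simultaneously ensuring $\bigcap_n\ran\Phi_n=\M$. Once this is in hand, nothing beyond the already-established Theorem~\ref{1.7} and Proposition~\ref{1.3} is required.
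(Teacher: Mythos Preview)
Your proposal is correct and follows essentially the same route as the paper: show that $\M$ is approximately $H$-injective with $\lambda_H(\M)\leq 2$ by writing it as a decreasing intersection of ranges of contractive Schur idempotents, invoke Proposition~\ref{1.3} to get $\lambda(\Phi_n)\leq 2$, and then apply Theorem~\ref{1.7}. The structural step you flag as the main obstacle---that every ternary masa-bimodule is such an intersection---is simply quoted in the paper as well-known from \cite{eletod}, so no new measure-theoretic construction is needed.
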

\begin{proof}
  It is well-known that every ternary masa-bimodule is the intersection of 
  a descending sequence of ranges of contractive Schur idempotents
  (see, {\it e.g.},~\cite{eletod}). By Proposition~\ref{1.3}, 
  $\M$ is approximately $H$-injective and
  $\lambda_H(\M)\leq2$.  The statement now follows from Theorem
  \ref{1.7}.
\end{proof}

\section{Hyperreflexivity and intersections}\label{s_inters}

In this section, we show that the intersection of a hyperreflexive masa-bimodule 
and an approximately $H$-injective one is hyperreflexive. 
We first establish this statement in a special case. 

\begin{lemma}\label{xxx1} If $\cl U$ is a hyperreflexive masa-bimodule and $\Phi \in \fH$, 
then the intersection $ \cl U\cap \ran \Phi  $ 
is hyperreflexive and $$k( \cl U\cap \ran \Phi  )\leq \lambda (\Phi )+\|\Phi \|k(\cl U).$$
\end{lemma}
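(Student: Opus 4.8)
The plan is to reduce the statement to the single distance estimate
\[ d(T,\V)\le\bigl(\lambda(\Phi)+\|\Phi\|\,k(\U)\bigr)\,\alpha(T,\V),\qquad T\in\B(H), \]
where $\V\deq\U\cap\ran\Phi$. Since $\V$ is weak* closed (being the intersection of two weak* closed masa-bimodules), this inequality already shows that $\V$ is hyperreflexive with hyperreflexivity constant bounded by $\lambda(\Phi)+\|\Phi\|k(\U)$. So I would fix $T\in\B(H)$, abbreviate $\alpha\deq\alpha(T,\V)$, and first record that, because $\alpha$ is order-reversing in its second variable and $\V\subseteq\U$, $\V\subseteq\ran\Phi$, one has $\alpha(T,\U)\le\alpha$ and $\alpha(T,\ran\Phi)\le\alpha$.

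Next I would construct an explicit approximant of $T$ lying in $\V$. Applying the hyperreflexivity of $\U$, for each $\epsilon>0$ I can choose $X\in\U$ with
\[ \|T-X\|\le k(\U)\,\alpha(T,\U)+\epsilon\le k(\U)\,\alpha+\epsilon. \]
The proposed approximant is $V\deq\Phi(X)$, which certainly lies in $\ran\Phi$. Granting for the moment that $V\in\U$ as well, so that $V\in\V$, the triangle inequality gives
\[ d(T,\V)\le\|T-\Phi(X)\|\le\|T-\Phi(T)\|+\|\Phi(T-X)\|=\|\Phi^\perp(T)\|+\|\Phi(T-X)\|. \]
Since $\Phi\in\fH$, the first summand is at most $\lambda(\Phi)\,\alpha(T,\ran\Phi)\le\lambda(\Phi)\,\alpha$, and the second is at most $\|\Phi\|\,\|T-X\|\le\|\Phi\|\,(k(\U)\alpha+\epsilon)$. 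Summing and letting $\epsilon\to0$ delivers exactly the desired estimate; note that applying $\Phi$ only once is what keeps the bound linear, rather than quadratic, in $\|\Phi\|$.

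The step I expect to be the main obstacle is the claim $V=\Phi(X)\in\U$, i.e.\ the invariance $\Phi(\U)\subseteq\U$. This is where the hypothesis on $\U$ is essential: a hyperreflexive masa-bimodule is in particular reflexive, and a reflexive masa-bimodule coincides with the largest masa-bimodule supported on its support set. Since Schur multiplication never enlarges the support of an operator, any Schur multiplier---in particular $\Phi$---maps such a maximal bimodule into itself. I would therefore isolate the invariance $\Phi(\U)\subseteq\U$ for reflexive $\U$ as a preliminary observation (it is transparent when $\U$ is itself the range of a Schur idempotent, because Schur multipliers commute, and the support calculus of \cite{eletod} extends it to an arbitrary reflexive $\U$), after which the short computation above completes the proof.
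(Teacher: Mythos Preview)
Your argument is correct and follows essentially the same route as the paper: both proofs use the invariance $\Phi(\U)\subseteq\U$ to identify $\U\cap\ran\Phi=\Phi(\U)$, then estimate $\|T-\Phi(X)\|\le\|\Phi^\perp(T)\|+\|\Phi\|\,\|T-X\|$ and bound the two terms via $\lambda(\Phi)$ and $k(\U)$ respectively. The only cosmetic differences are that you introduce an $\epsilon$ and pass to the limit where the paper takes an infimum directly, and that you spell out the justification for the invariance $\Phi(\U)\subseteq\U$ (which the paper simply asserts).
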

\begin{proof} 
Let $\cl W=\cl U\cap \ran \Phi$. Since $\cl U$ is invariant under $\Phi$, we have
\begin{equation}\label{eq_U}
\cl W = \{\Phi (X): X\in \cl U\}.
\end{equation}
For arbitrary $T\in \B(H)$ we have 
\begin{eqnarray*} 
\|T-\Phi (X)\| & \leq & \|T-\Phi (T)\|+ \| \Phi \| \|T-X\|  \\ 
& \leq & \lambda (\Phi )\alpha (T, \ran \Phi )+ \|\Phi \|\|T-X\| .  
\end{eqnarray*}
Thus, 
$$\inf_{X\in \cl U}\|T-\Phi (X)\|\leq \lambda (\Phi )\alpha (T, \ran \Phi )+ \|\Phi \|\inf_{X\in \cl U}\|T-X\| $$ 
and, by (\ref{eq_U}),
 \begin{eqnarray*} 
d(T, \cl W) 
& \leq & \lambda (\Phi )\alpha (T, \ran \Phi )+ \|\Phi \|d(T,\cl U)  \\
& \leq & \lambda (\Phi )\alpha (T, \ran \Phi )+ \|\Phi \|k(\cl U)\alpha (T, \cl U).
\end{eqnarray*}
By the monotonicty of $\alpha$, we have
\[d(T, \cl W)\leq (\lambda (\Phi )+\|\Phi \|k(\cl U))\alpha (T, \cl W).\qedhere\]
\end{proof}

\begin{theorem}\label{xxx2} 
If $\cl V$ is an approximately H-injective masa-bimodule and $\cl U$ is a hyperreflexive masa-bimodule, 
then the intersection $\W =\U\cap \V$ is hyperreflexive and 
$$k(\cl W)\leq \lambda _H(\V)+ k(\cl U) + \lambda _H(\cl V)k(\cl U).$$ 
\end{theorem}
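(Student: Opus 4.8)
The plan is to adapt the argument of Theorem~\ref{1.7} to intersections, replacing Lemma~\ref{1.5} by Lemma~\ref{xxx1} as the basic building block and again funnelling everything through Lemma~\ref{1.6}. First I would fix a uniformly bounded sequence $(\Phi_n)_{n\in\bN}$ in $\fH$ decreasing to $\V$ with $\lambda \deq \sup_n \lambda(\Phi_n) < \infty$; such a sequence exists by the definition of approximate $H$-injectivity, and it may be chosen so that $\lambda$ is arbitrarily close to $\lambda_H(\V)$. Since $\Phi_1 \geq \Phi_2 \geq \cdots$ we have $\ran\Phi_1 \supseteq \ran\Phi_2 \supseteq \cdots$, so the spaces $\U_n \deq \U \cap \ran\Phi_n$ form a decreasing sequence of weak* closed masa-bimodules with
\[ \W = \U \cap \V = \U \cap \bigcap_n \ran\Phi_n = \bigcap_n \U_n. \]
Unlike the span case of Theorem~\ref{1.7}, this set-theoretic identity is immediate and needs no weak* closure result.

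Next I would apply Lemma~\ref{xxx1} to each $n$, obtaining that $\U_n$ is hyperreflexive with $k(\U_n) \leq \lambda(\Phi_n) + \|\Phi_n\|\, k(\U)$. The step I expect to carry the real content is eliminating the factor $\|\Phi_n\|$ in favour of $\lambda(\Phi_n)$, so that the final estimate depends only on $\lambda_H(\V)$ and $k(\U)$. For this I would observe that, directly from Definition~\ref{es} together with the elementary bound $\alpha(T,\ran\Phi) \leq d(T,\ran\Phi) \leq \|T\|$ (valid since $0 \in \ran\Phi$), one has $\|\Phi^\perp(T)\| \leq \lambda(\Phi)\|T\|$ for every $T$, i.e.\ $\|\Phi^\perp\| \leq \lambda(\Phi)$. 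Consequently $\|\Phi_n\| = \|\id - \Phi_n^\perp\| \leq 1 + \lambda(\Phi_n)$, and feeding this into Lemma~\ref{xxx1} gives
\[ k(\U_n) \leq \lambda(\Phi_n) + \bigl(1 + \lambda(\Phi_n)\bigr)k(\U) = \lambda(\Phi_n)\bigl(1 + k(\U)\bigr) + k(\U). \]
(As a byproduct, this shows the uniform boundedness required in the definition of approximate $H$-injectivity is automatic once $\sup_n\lambda(\Phi_n)<\infty$.)

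Finally, since $\sup_n k(\U_n) < \infty$ by the previous estimate and $(\U_n)$ is decreasing, Lemma~\ref{1.6} applies and yields that $\W = \bigcap_n \U_n$ is hyperreflexive with $k(\W) \leq \limsup_n k(\U_n)$. Bounding $\lambda(\Phi_n) \leq \lambda$ gives
\[ k(\W) \leq \lambda\bigl(1 + k(\U)\bigr) + k(\U) = \lambda + k(\U) + \lambda\, k(\U), \]
and taking the infimum over all admissible sequences replaces $\lambda$ by $\lambda_H(\V)$, producing the stated bound $k(\W) \leq \lambda_H(\V) + k(\U) + \lambda_H(\V)\,k(\U)$. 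The only genuinely new ingredient beyond the machinery already assembled for spans is the norm estimate $\|\Phi^\perp\| \leq \lambda(\Phi)$; everything else is a transcription of the span argument with the sum replaced by the intersection.
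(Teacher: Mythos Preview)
Your proof is correct and follows essentially the same approach as the paper: fix a decreasing sequence $(\Phi_n)\subseteq\fH$ with $\sup_n\lambda(\Phi_n)<\infty$, bound $\|\Phi_n\|\leq 1+\lambda(\Phi_n)$ via $\|\Phi_n^\perp\|\leq\lambda(\Phi_n)$, apply Lemma~\ref{xxx1} to each $\U\cap\ran\Phi_n$, and finish with Lemma~\ref{1.6} and an infimum over admissible sequences. You supply slightly more detail than the paper does (justifying the norm bound $\|\Phi^\perp\|\leq\lambda(\Phi)$, and correctly noting that the identity $\U\cap\bigcap_n\ran\Phi_n=\bigcap_n(\U\cap\ran\Phi_n)$ is immediate set theory rather than requiring a citation), but the structure is identical.
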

\begin{proof} 
Let $(\Phi _n)_{n\in \bb N}$ be a uniformly bounded sequence in $\fH$ decreasing to $\V$ with 
$$\lambda =\sup_{n\in \bb{N}}\lambda (\Phi _n) < \infty.$$
Since $\|\Phi _n^\bot \|\leq \lambda (\Phi _n)$ for all $n,$ we have 
\begin{equation}\label{eq_1plu}
\sup_{n\in \bb{N}}\|\Phi _n\|\leq 1+\lambda .
\end{equation}
By the proof of \cite[Theorem 2.5]{eletod}, 
$$\W = \cap _{n=1}^\infty (\cl U\cap \ran \Phi _n).$$ 
By (\ref{eq_1plu}) and Lemma \ref{xxx1},
$$k(\cl U\cap \ran \Phi _n)\leq \lambda +(1+\lambda )k(\cl U), \ \ \ n\in \bb{N}.$$ 
Lemma \ref{1.6} now implies that $\cl W$ is hyperreflexive and 
$$k(\cl W)\leq \lambda +(1+\lambda )k(\cl U).$$
The stated estimate follows after taking the infimum over all possible values of~$\lambda$.
\end{proof}

Using Theorem \ref{xxx2} and arguing as in the proof of Corollary \ref{c_ahit}, 
we obtain the following corollary.

\begin{corollary}\label{xxx3} 
If $\cl U$ is a hyperreflexive masa-bimodule and $\M$ is a ternary masa-bimodule then
$\U\cap \M$ is hyperreflexive and 
$$k(\U\cap \M)\leq 2+3k(\U).$$
\end{corollary}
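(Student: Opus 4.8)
The plan is to exhibit the ternary masa-bimodule $\M$ as an approximately $H$-injective masa-bimodule with $\lambda_H(\M)\leq 2$, and then simply substitute into the bound of Theorem \ref{xxx2}. This mirrors exactly the way Corollary \ref{c_ahit} is deduced from Theorem \ref{1.7}, so the argument is essentially a transcription with the span replaced by the intersection.

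First I would recall the structural fact already used in the proof of Corollary \ref{c_ahit}: every ternary masa-bimodule $\M$ can be written as $\M=\bigcap_n \ran\Phi_n$ for a descending sequence $(\Phi_n)_{n\in\bN}$ of \emph{contractive} Schur idempotents, that is, $\Phi_1\geq\Phi_2\geq\cdots$ with $\sup_n\|\Phi_n\|\leq 1$. This sequence witnesses that $\M$ is approximately $H$-injective. Indeed, by Proposition \ref{1.3} each $\Phi_n$ lies in $\fH$ with $\lambda(\Phi_n)\leq 2$, so $\sup_n\lambda(\Phi_n)\leq 2$, and taking the infimum over all admissible witnessing sequences gives $\lambda_H(\M)\leq 2$.

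With this established, the estimate is immediate: applying Theorem \ref{xxx2} with $\V=\M$ yields
\[ k(\U\cap\M)\leq \lambda_H(\M)+k(\U)+\lambda_H(\M)\,k(\U)\leq 2+k(\U)+2k(\U)=2+3k(\U), \]
which is the asserted bound. The hyperreflexivity of $\U\cap\M$ is part of the conclusion of Theorem \ref{xxx2}.

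I do not expect any genuine obstacle here. The only nonformal ingredient is the decomposition of a ternary masa-bimodule as a descending intersection of ranges of contractive Schur idempotents, but this is standard (and is cited, e.g.\ from \cite{eletod}) and is precisely the same input already used in Corollary \ref{c_ahit}. Everything else is a direct substitution into the constant appearing in Theorem \ref{xxx2}, so the proof will be short.
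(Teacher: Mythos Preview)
Your proposal is correct and matches the paper's own proof exactly: the paper simply says to use Theorem~\ref{xxx2} and argue as in the proof of Corollary~\ref{c_ahit}, which is precisely what you have written out in detail.
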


\begin{corollary}\label{xxx4} 
If $\cl U$ is a weak* closed nest algebra bimodule and $\M$ is a ternary masa-bimodule then 
$$k(\U\cap \M)\leq 5.$$
\end{corollary}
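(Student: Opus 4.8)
The plan is to reduce the statement directly to Corollary~\ref{xxx3}. That corollary asserts that $k(\U\cap\M)\le 2+3k(\U)$ whenever $\U$ is a hyperreflexive masa-bimodule and $\M$ is ternary, so it suffices to show that a weak* closed nest algebra bimodule $\U$ is hyperreflexive with $k(\U)\le 1$. Substituting $k(\U)=1$ into the estimate of Corollary~\ref{xxx3} then yields at once
\[ k(\U\cap\M)\le 2+3\cdot 1=5. \]
Thus the entire content of the corollary lies in the single claim $k(\U)\le 1$.

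To establish this claim I would invoke the bimodule form of Arveson's distance formula. Writing $\cl N\subseteq\cl P(\cl D)$ for the nest, the point is that every weak* closed bimodule $\U$ over the nest algebra $\operatorname{Alg}(\cl N)$ is reflexive and satisfies
\[ d(T,\U)=\alpha(T,\U), \qquad T\in\cl B(H), \]
so that the ordinary distance to $\U$ is already realised by the Arveson distance. For the nest algebra itself this is Arveson's theorem \cite{arv,a_nest}; the extension to reflexive nest bimodules is standard and is developed in Davidson's monograph \cite{dav}, via the parametrisation of such bimodules by order homomorphisms of $\cl N$. Since $\alpha\le d$ always holds, the inequality $k(\U)\le 1$ is in fact an equality, so nothing is lost in the reduction.

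Two minor points deserve care. First, to apply Corollary~\ref{xxx3} one needs $\U$ to be a $\cl D$-bimodule; this is automatic here because the nest consists of projections of the fixed masa $\cl D$ and $\U$ is $\cl D$-invariant, so $\U$ is indeed a weak* closed masa-bimodule. Second, one must apply the distance formula to the correct class, namely nest (that is, totally ordered) bimodules, for which the constant is precisely~$1$; for bimodules over general commutative subspace lattices the formula may hold only with a larger constant or fail entirely, and this is what restricts the clean bound of~$5$ to the nest case. The only genuine obstacle is locating and quoting the correct bimodule version of the distance formula rather than the algebra version; once $k(\U)\le 1$ is in hand, the bound $k(\U\cap\M)\le 5$ is an immediate instance of Corollary~\ref{xxx3}, requiring no further estimation.
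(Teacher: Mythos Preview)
Your proposal is correct and follows essentially the same approach as the paper: reduce to Corollary~\ref{xxx3} and use the fact that weak* closed nest algebra bimodules satisfy $k(\U)=1$, then substitute into $2+3k(\U)$ to obtain $5$. The paper's own proof is a single sentence citing \cite{dale} for $k(\U)=1$, whereas you spell out the provenance (the bimodule extension of Arveson's distance formula) and the auxiliary point that $\U$ is a $\cl D$-bimodule; these elaborations are fine but not required.
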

\begin{proof} 
The statement is immediate from Corollary \ref{xxx3} and the fact that $k(\U)=1$ \cite{dale}. 
\end{proof}

\section{Hyperreflexivity and tensor products}\label{s_htp}

In this section, we establish a preservation result for hyperreflexivity 
under the formation of tensor products. 
In addition to the Hilbert space $H$, 
we fix a separable Hilbert space $K$ and a masa in $\cl B(K)$. 
If $\cl U\subseteq \cl B(H)$ and 
$\cl V\subseteq \cl B(K)$ are subspaces, we denote by 
$\cl U\otimes\cl V$ their algebraic tensor product, viewed as a subspace of $\cl B(H\otimes K)$, 
so that $\cl U\bar\otimes\cl V$ is the weak* closure of $\cl U\otimes\cl V$.

Recall that $\fC(H)$ denotes the Boolean lattice generated by the
contractive Schur idempotents acting on $\cl
B(H)$. By~(\ref{eq_form1}), it is easy to see that if $\Phi$ is a
contractive Schur idempotent on $\B(H)$, then $\Phi\otimes\id$ is a
contractive Schur idempotent on~$\B(H\otimes K)$. Since tensoring with
the identity map on $K$ commutes with the lattice operations, it
follows that if $\Phi\in \fC(H)$ then $\Phi\otimes \id \in
\fC(H\otimes K)$.  By Corollary \ref{c_contains},
$(\ran\Phi)\bar\otimes \cl B(K) = \ran(\Phi\otimes\id)$ is
hyperreflexive, so $\ran\Phi$ is completely hyperreflexive.  We let
$k_c(\Phi) = k_c(\ran\Phi)$, and $\lambda_c(\Phi) =
\lambda(\Phi\otimes\id)$.

\begin{theorem}\label{777}
Let $\Phi_i\in\fC(H)$, $\cl X_i = \ran\Phi_i$, and 
$\U_i\subseteq \cl B(K)$ be a weak* closed subspace, $i=1,\dots,n$.
Suppose that, for every non-empty subset $E = \{i_1,\dots,i_m\}$ of the set $\{1,\dots,n\}$, the space 
$$\cl U_E \deq\overline{\U_{i_1} + \cdots + \cl U_{i_m}}^{w^*}$$ 
is completely hyperreflexive. 
Then the space
\[\W= \overline{\X_1\otimes\U_1+\dots+\X_n\otimes\U_n}^{w^*} \]  
is completely hyperreflexive.
\end{theorem}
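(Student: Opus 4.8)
The plan is to \emph{diagonalise} $\W$ against the finite Boolean algebra of Schur idempotents generated by $\Phi_1\otimes\id,\dots,\Phi_n\otimes\id$ on $\cl B(H\otimes K)$, reduce matters to a single tensor factor on each atom, and reassemble in the predual via Arveson's criterion. Since the $\Phi_i$ commute, for each $E\subseteq\{1,\dots,n\}$ the Schur idempotent $\theta_E\deq\prod_{i\in E}\Phi_i\prod_{i\notin E}\Phi_i^\perp$ lies in $\fC(H)$, the idempotents $\Theta_E\deq\theta_E\otimes\id$ are pairwise orthogonal with $\sum_E\Theta_E=\id$, and each $\Theta_E\in\fC(H\otimes K)\subseteq\fH$. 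Writing $\Y_E=\ran\theta_E$, I would first record the two identities on which everything rests: $\theta_E\Phi_i=\theta_E$ for $i\in E$ and $\theta_E\Phi_i=0$ for $i\notin E$. These give $\Theta_E(\X_i\otimes\U_i)=\Y_E\otimes\U_i$ when $i\in E$ and $\{0\}$ otherwise, so that $\W$ is invariant under each $\Theta_E$, with $\Theta_E(\W)=\Y_E\bar\otimes\cl U_E$; consequently $\W$ is the finite, idempotent-orthogonal internal direct sum $\W=\bigoplus_E\Theta_E(\W)$.

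The second step is a building-block claim: if $\theta\in\fC(H)$ and $\cl V\subseteq\cl B(K)$ is completely hyperreflexive, then $(\ran\theta)\bar\otimes\cl V$ is completely hyperreflexive. To prove it I would observe that $\cl B(H)\bar\otimes\cl V$ is a hyperreflexive masa-bimodule (being, up to the spatial flip, $\cl V\bar\otimes\cl B(H)$, hyperreflexive by complete hyperreflexivity of $\cl V$) which is invariant under the idempotent $\theta\otimes\id\in\fH$. Lemma~\ref{xxx1} then applies, and by the identity~(\ref{eq_U}) in its proof the resulting intersection equals $(\theta\otimes\id)(\cl B(H)\bar\otimes\cl V)=(\ran\theta)\bar\otimes\cl V$, which is therefore hyperreflexive; tensoring by $\cl B(\H)$ and repeating upgrades this to complete hyperreflexivity. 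Applying the claim with $\theta=\theta_E$ and $\cl V=\cl U_E$ (completely hyperreflexive by hypothesis) shows each atom $\Y_E\bar\otimes\cl U_E$ is completely hyperreflexive.

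The reassembly is the crux, and the difficulty is exactly that the $\Theta_E$ are idempotents but \emph{not} orthogonal projections, so the expansion $T=\sum_E\Theta_E(T)$ is far from isometric and operator-norm bookkeeping is awkward. I would therefore pass to the predual and use Theorem~\ref{arve}. Orthogonality dualises to orthogonal idempotents $(\Theta_E)_*$ on $\cl T(H\otimes K)$ with $\sum_E(\Theta_E)_*=\id$. Given $\omega\in\W_\perp$, set $\eta_E=(\Theta_E)_*(\omega)$; then $\omega=\sum_E\eta_E$, each $\eta_E\in\ran(\Theta_E)_*$, and $\eta_E$ annihilates $\Y_E\bar\otimes\cl U_E$ because $\Theta_E(\W)\subseteq\W$. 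Since $\Y_E\bar\otimes\cl U_E$ is hyperreflexive, Theorem~\ref{arve} yields a rank-one decomposition $\eta_E=\sum_i\zeta_{E,i}$ in its pre-annihilator with $\sum_i\|\zeta_{E,i}\|_1\leq(k(\Y_E\bar\otimes\cl U_E)+\epsilon)\|\eta_E\|_1$; applying $(\Theta_E)_*$, which fixes $\eta_E$, produces rank-one operators $\omega_{E,i}=(\Theta_E)_*(\zeta_{E,i})\in\ran(\Theta_E)_*$ that still annihilate $\Y_E\bar\otimes\cl U_E$ and hence lie in $\W_\perp$. Summing over $E$ and $i$ recovers $\omega$ with total trace-norm at most $\big(\max_E\|\Theta_E\|(k(\Y_E\bar\otimes\cl U_E)+\epsilon)\big)\big(\sum_E\|\Theta_E\|\big)\|\omega\|_1$, so Theorem~\ref{arve} gives hyperreflexivity of $\W$ with an explicit constant. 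The projection step here is precisely what tames the norm factors $\|\Theta_E\|$ forced by the non-self-adjointness of the atoms.

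Finally, to obtain \emph{complete} hyperreflexivity I would run this entire scheme for $\W\bar\otimes\cl B(\H)$: the idempotents $\Theta_E\otimes\id$ remain pairwise orthogonal members of $\fC$ summing to the identity, the atoms become $\Y_E\bar\otimes\cl U_E\bar\otimes\cl B(\H)$—hyperreflexive precisely because each $\Y_E\bar\otimes\cl U_E$ is completely hyperreflexive by the second step—and the predual assembly transfers verbatim. All constants are finite since there are only finitely many $E$ and each $\theta_E\in\fC$ is a bounded Schur multiplier, so $\|\Theta_E\|=\|\theta_E\|<\infty$.
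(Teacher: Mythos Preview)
Your overall plan---diagonalise against the atoms $\theta_E=\prod_{i\in E}\Phi_i\prod_{i\notin E}\Phi_i^\perp$, prove a building-block lemma, and reassemble in the predual---is natural, and the first two steps are fine. The set-up that $\Theta_E(\W)=\Y_E\bar\otimes\cl U_E\subseteq\W$ and $\sum_E\Theta_E=\id$ is correct, and your appeal to Lemma~\ref{xxx1} for the atoms $(\ran\theta)\bar\otimes\cl V$ works (a pedantic point: $\cl B(H)\bar\otimes\cl V$ is not a \emph{masa}-bimodule over $\cl D\bar\otimes\cl E$ unless $\cl V$ is an $\cl E$-bimodule, but the proof of Lemma~\ref{xxx1} only uses that $\cl U$ is hyperreflexive and $\Phi$-invariant, which holds here).

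The genuine gap is in the reassembly. After you write $\eta_E=\sum_i\zeta_{E,i}$ with each $\zeta_{E,i}$ a rank-one element of $(\Y_E\bar\otimes\cl U_E)_\perp$, these $\zeta_{E,i}$ need \emph{not} lie in $\W_\perp$: the atom $\Y_E\bar\otimes\cl U_E$ sits \emph{inside} $\W$, not the other way round, so its pre-annihilator is larger than $\W_\perp$, not smaller. You try to repair this by applying $(\Theta_E)_*$ to each $\zeta_{E,i}$; that does land you in $\W_\perp$, as you note, but $(\Theta_E)_*$ is (the predual of) a Schur idempotent and does \emph{not} preserve rank one. (Concretely, on $\ell^2$ the diagonal Schur idempotent sends the rank-one all-ones matrix to the identity.) So the operators $\omega_{E,i}=(\Theta_E)_*(\zeta_{E,i})$ are no longer rank one, and Theorem~\ref{arve} does not apply. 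Running the same scheme for $\W\bar\otimes\cl B(\H)$ inherits the same defect.

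The paper's proof uses the same partition of unity $\id=\sum_E\Phi_E\Psi_{E^c}^\perp$ (your $\theta_E$), but works instead with the \emph{large} auxiliary spaces
\[
\cl Z_E \;=\; (\ran\Psi_{E^c})\bar\otimes\cl B(K)\;+\;\cl B(H)\bar\otimes\cl U_E,
\]
which \emph{contain} $\W$. One shows $\theta_E^{\,*}(\omega)\in(\cl Z_E)_\perp$ for $\omega\in\W_\perp$, proves each $\cl Z_E$ hyperreflexive via Lemma~\ref{1.5}, and then the rank-one pieces produced by Theorem~\ref{arve} are automatically in $(\cl Z_E)_\perp\subseteq\W_\perp$---no projection is needed, and rank one is preserved. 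If you want to salvage your argument, this is the move: replace your atoms $\Y_E\bar\otimes\cl U_E$ (subspaces of $\W$) by hyperreflexive overspaces of $\W$ adapted to each $E$.
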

\begin{proof}
It will be convenient to set $\cl U_{\emptyset} = \{0\}$. 
We first show that $\cl W$ is hyperreflexive. 
Let $\cl S$ be the set of all subsets of $\{1,\dots,n\}$. 
For $E\in \cl S$, let 
$$\Phi_E = \bigwedge_{i\in E} \Phi_i, \ \ \mbox{ and } \ \ \Psi_E = \bigvee_{i\in E} \Phi_i ,$$
where $\Phi_{\emptyset} = \id$ and $\Psi_{\emptyset} = 0$. 
Then
\begin{equation}\label{eq_phispsi}
\id = \sum_{E\in \cl S} \Phi_E \Psi_{E^c}^{\perp}.
\end{equation}
Note that, for each $E\in \cl S$, we have
\begin{equation}\label{eq_w}
\cl W  \subseteq \overline{\ran\Psi_{E^c}\otimes \B(K) + \cl B(H)\otimes \cl U_E}^{w^*}.
\end{equation}
Indeed, for each $i$, either $i\in E$, in which case 
$\cl X_i\otimes \cl U_i\subseteq \cl B(H)\otimes \cl U_E$, or $i\in E^c$, 
in which case $\cl X_i\otimes \cl U_i\subseteq \ran\Psi_{E^c}\otimes\B(K)$. 

For $E\in \cl S$, set 
$$\theta_E = (\Phi_E \Psi_{E^c}^{\perp})_* = (\Phi_E)_* (\Psi_{E^c}^{\perp})_*$$
and let $\omega\in \cl W_{\perp}$. 
Since $\cl W$ is invariant under the map $\Phi_E \Psi_{E^c}^{\perp}$, we have that 
\begin{equation}\label{eq_omin}
\theta_E(\omega) \in \cl W_{\perp}.
\end{equation}
By (\ref{eq_phispsi}), 
$\omega = \sum_{E\in \cl S} \theta_E(\omega)$.

We claim that 
\begin{equation}\label{eq_eom}
\theta_E(\omega) \in \big((\ran\Psi_{E^c})\otimes \cl B(K) + \cl B(H)\otimes \cl U_E\big)_{\perp}.
\end{equation}
To show (\ref{eq_eom}), suppose first that $X\in \ran\Psi_{E^c}$ and $B\in \cl B(K)$. 
Then 
$$\langle X\otimes B, \theta_E(\omega)\rangle
 =  \langle X\otimes B, (\Psi_{E^c}^{\perp})_*(\theta_E(\omega))\rangle
=  \langle \Psi_{E^c}^{\perp}(X\otimes B), \theta_E(\omega)\rangle = 0,$$
and hence 
\begin{equation}\label{eq_onep}
\theta_E(\omega) \in \big((\ran\Psi_{E^c})\bar\otimes \cl B(K)\big)_{\perp}.
\end{equation}
Now let $A\in \cl B(H)$ and $Y\in \cl U_E$. Then 
$$\Phi_E(A\otimes Y) \in (\cap_{i\in E}\cl X_i) \otimes \cl U_E\subseteq \cl W$$
and, using (\ref{eq_omin}), we see that 
$$\langle A\otimes Y, \theta_E(\omega)\rangle
=  \langle A\otimes Y, (\Phi_E)_*(\theta_E(\omega))\rangle
=  \langle \Phi_E(A\otimes Y), \theta_E(\omega)\rangle = 0.$$
Thus, 
\begin{equation}\label{eq_onep2}
\theta_E(\omega) \in (\cl B(H)\bar\otimes \cl U_E)_{\perp}.
\end{equation}
Now (\ref{eq_onep}) and (\ref{eq_onep2}) imply (\ref{eq_eom}). 

Let
$$k_E \deq k_c(\cl U_E)\prod_{i\in E^c} \lambda_c(\Phi_i)$$
and fix $\epsilon > 0$.
By Lemma \ref{1.5}, 
$(\ran\Psi_{E^c})\bar\otimes \cl B(K) + \cl B(H)\bar\otimes \cl U_E$ is hyperreflexive 
and 
$$k((\ran\Psi_{E^c})\bar\otimes \cl B(K) + \cl B(H)\bar\otimes \cl U_E)\leq 
k_c(\cl U_E)\lambda_c(\Psi_{E^c}) \leq k_E$$
since, by Lemma~\ref{1.5} and Remark~\ref{r_hh}, 
$$\lambda_c(\Psi_{E^c}) \leq \prod_{i\in E^c} \lambda_c(\Phi_i).$$
It thus follows from (\ref{eq_eom}) and Theorem~\ref{arve} that there exist rank one operators
$$\omega_E^\ell \in \big((\ran\Psi_{E^c})\bar\otimes \cl B(K) + \cl B(H)\bar\otimes \cl U_E\big)_{\perp},  \ \ \ell\in \bb{N},$$ 
such that 
$$\sum_{\ell=1}^{\infty} \|\omega_E^\ell\|_1 < (k_E +\epsilon)\|\theta_E(\omega)\| 
\leq (k_E +\epsilon) \|\Phi_E \Psi_{E^c}^{\perp}\| \|\omega\|_1$$
and 
$$\theta_E(\omega) = \sum_{\ell=1}^{\infty} \omega_E^\ell$$
in the trace norm.
It follows that 
$$\sum_{E\in \cl S} \sum_{\ell = 1}^{\infty} \|\omega_E^{\ell}\|_1 
\leq  \left(\sum_{E \in \cl S} (k_E +\epsilon) \|\Phi_E \Psi_{E^c}^{\perp}\|\right) \|\omega\|_1$$
and 
$$\omega = \sum_{E \in \cl S} \sum_{\ell = 1}^{\infty} \omega_E^{\ell}$$
in the trace norm.

Note that, by (\ref{eq_w}), $\omega_E^{\ell}\in \cl W_{\perp}$ for each $E\in \cl S$ and each $\ell\in \bb{N}$. 
By Theorem~\ref{arve}, $\cl W$ is hyperreflexive and 
\begin{equation}\label{eq_longes}
k(\cl W) \leq \sum_{E\in \cl S} k_c(\cl U_E) \prod_{i\in E^c} \lambda_c(\Phi_i) \|\Phi_E \Psi_{E^c}^{\perp}\|.
\end{equation}

To see that $\cl W$ is completely hyperreflexive, note that, if $\cl H$ is a separable Hilbert space, 
then 
$$\cl B(\cl H)\bar\otimes\cl W 
=  \overline{(\cl B(\cl H)\bar\otimes \X_1)\otimes\U_1+\cdots+(\cl B(\cl H)\bar\otimes \X_n)\otimes\U_n}^{w^*}.$$
Since $\cl B(\cl H)\bar\otimes \X_i = \ran(\id\otimes\Phi_i)$ and 
$\lambda_c(\id\otimes\Phi_i) = \lambda_c(\Phi_i)$, $i = 1,\dots,n$,
the claim now follows from the previous paragraphs.
\end{proof}

\begin{remark}
  It should be noted that the (complete) hyperreflexivity of the
  spaces $\cl U_E$ cannot be omitted from the assumptions of Theorem
  \ref{777}. Indeed, it is implied by its conclusion by taking $\Phi_i
  = \id$ for $i\in E$ and $\Phi_i = 0$ for $i\not\in E$.
\end{remark}

\begin{corollary}\label{444}
Let $\Phi\in\fC(H)$, $\cl X = \ran\Phi$ and $\U\subseteq\B(K)$ be a completely hyperreflexive subspace. 
Then the space $\W=\X\bar\otimes\U$ is 
hyperreflexive and \[ k(\W)\leq \lambda_c(\Phi)\|\Phi^{\perp}\| + k_c(\U)\|\Phi\| .\]
\end{corollary}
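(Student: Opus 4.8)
The plan is to derive Corollary \ref{444} as a special case of Theorem \ref{777} with $n=1$. We take the single Schur idempotent $\Phi_1 = \Phi \in \fC(H)$ so that $\cl X_1 = \cl X = \ran\Phi$, and we set $\cl U_1 = \cl U \subseteq \cl B(K)$. The only nonempty subset of $\{1\}$ is $E = \{1\}$ itself, for which $\cl U_E = \cl U_{\{1\}} = \cl U$ is completely hyperreflexive by hypothesis; thus the standing assumption of Theorem \ref{777} is satisfied. The conclusion is that $\cl W = \overline{\cl X \otimes \cl U}^{w^*} = \cl X\bar\otimes\cl U$ is (completely) hyperreflexive.

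It remains to extract the stated constant from the general estimate \eqref{eq_longes}. With $n=1$, the index set $\cl S$ consists of exactly two elements, $E = \emptyset$ and $E = \{1\}$, and I would simply evaluate the two summands. For $E = \{1\}$ we have $E^c = \emptyset$, so the product over $i \in E^c$ is empty (hence equals $1$), $\Psi_{E^c} = \Psi_{\emptyset} = 0$, and $\Phi_E = \Phi_{\{1\}} = \Phi$; since $\Psi_{\emptyset}^{\perp} = \id$, the factor $\|\Phi_E \Psi_{E^c}^{\perp}\| = \|\Phi\|$ and $k_c(\cl U_E) = k_c(\cl U)$, giving a contribution of $k_c(\cl U)\|\Phi\|$. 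For $E = \emptyset$ we have $E^c = \{1\}$, so $\Phi_E = \Phi_{\emptyset} = \id$, $\Psi_{E^c} = \Phi$, the product over $i \in E^c$ is $\lambda_c(\Phi_1) = \lambda_c(\Phi)$, and $\|\Phi_E \Psi_{E^c}^{\perp}\| = \|\Phi^{\perp}\|$; here $\cl U_E = \cl U_{\emptyset} = \{0\}$, whose complete hyperreflexivity constant is $k_c(\{0\}) = 1$ (the zero space trivially satisfies the distance formula with constant $1$), giving a contribution of $\lambda_c(\Phi)\|\Phi^{\perp}\|$. Summing the two terms yields exactly $k(\cl W) \leq \lambda_c(\Phi)\|\Phi^{\perp}\| + k_c(\cl U)\|\Phi\|$.

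The only point requiring a small justification is the bookkeeping for the empty set $E = \emptyset$, where one must use the conventions fixed in the proof of Theorem \ref{777}, namely $\cl U_{\emptyset} = \{0\}$, $\Phi_{\emptyset} = \id$ and $\Psi_{\emptyset} = 0$, together with the convention $k_c(\{0\}) = 1$. I do not anticipate any genuine obstacle: the substantive analytic work — weak* closedness, the decomposition of $\cl W_\perp$ via the maps $\theta_E$, and the application of Theorem \ref{arve} — has already been carried out in Theorem \ref{777}, and this corollary is a direct specialisation. The main task is merely to verify that the abstract estimate \eqref{eq_longes} collapses to the claimed two-term bound when $n = 1$.
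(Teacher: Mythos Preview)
Your proposal is correct and follows exactly the paper's own approach: the paper's proof is the one-line remark that the claim is immediate from estimate~\eqref{eq_longes} after using $\lambda_c(\{0\}) = k_c(\{0\}) = 1$, and your write-up simply unpacks the two summands of that estimate for $n=1$ precisely as intended.
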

\begin{proof}
The claim is immediate from estimate (\ref{eq_longes}), after taking into account that 
$\lambda_c(\{0\}) = k_c(\{0\}) = 1$.
\end{proof}

\begin{corollary}\label{333} 
Let $\{\Phi_1,\dots,\Phi_n\} \subseteq\fC(H),$ and $\{\Psi_1,\dots,\Psi_n\}\subseteq\fC(K)$. 
If $\X_i=\ran\Phi_i$ and $\Y_i=\ran\Psi_i$, $i = 1,\dots,n$, 
then the space 
\[ \X_1\bar\otimes\Y_1+\dots+ \X_n\bar\otimes\Y_n \] is hyperreflexive.
\end{corollary}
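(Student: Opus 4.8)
The plan is to deduce this as a special case of Theorem~\ref{777}, applied with the second-variable spaces taken to be the ranges $\cl Y_i$. Concretely, I would set $\cl U_i = \cl Y_i = \ran\Psi_i$ for $i = 1,\dots,n$. With this choice the space $\cl W$ produced by Theorem~\ref{777}, namely the weak* closed span of the products $\cl X_i\otimes\cl Y_i$, is precisely the sum $\cl X_1\bar\otimes\cl Y_1 + \dots + \cl X_n\bar\otimes\cl Y_n$ appearing in the statement. It therefore suffices to verify the single hypothesis of Theorem~\ref{777}: that for every non-empty $E\subseteq\{1,\dots,n\}$, the space $\cl U_E = \overline{\sum_{i\in E}\cl Y_i}^{w^*}$ is completely hyperreflexive.

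To check this, I would first identify $\cl U_E$ as the range of a single Schur idempotent in $\fC(K)$. Put $\Psi_E = \bigvee_{i\in E}\Psi_i$. Since each $\Psi_i$ lies in the Boolean lattice $\fC(K)$, which is closed under the join operation, we have $\Psi_E\in\fC(K)$. Moreover, applying repeatedly the identity $\ran(\Psi\vee\Sigma) = \ran\Psi + \ran\Sigma$ (used in the proof of Theorem~\ref{est}), together with the weak* closedness of such sums, one obtains $\ran\Psi_E = \overline{\sum_{i\in E}\cl Y_i}^{w^*} = \cl U_E$.

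It then remains to note that the range of any member of $\fC(K)$ is completely hyperreflexive. This is precisely the argument recorded in the paragraph preceding Theorem~\ref{777}, read with $K$ in place of $H$: as $\Psi_E\in\fC(K)$, the idempotent $\Psi_E\otimes\id$ belongs to $\fC(K\otimes\cl H)$, so by Corollary~\ref{c_contains} the space $(\ran\Psi_E)\bar\otimes\cl B(\cl H) = \ran(\Psi_E\otimes\id)$ is hyperreflexive, whence $\cl U_E = \ran\Psi_E$ is completely hyperreflexive. With the hypothesis of Theorem~\ref{777} thus verified, that theorem yields that $\cl W$ is completely hyperreflexive, and in particular hyperreflexive.

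Because both $\Phi_i$ and $\Psi_i$ are assumed to lie in the respective Boolean lattices, no genuine difficulty arises: the proof reduces to recognising the span of the $\cl Y_i$ over a subset $E$ as a single range in $\fC(K)$ and invoking complete hyperreflexivity of such ranges. The only point demanding care is the bookkeeping identification $\cl U_E = \ran\Psi_E$, which simultaneously guarantees that $\Psi_E$ stays in $\fC(K)$ and that its range is the full weak* closed span of the $\cl Y_i$, $i\in E$.
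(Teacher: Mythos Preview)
Your approach is correct and essentially identical to the paper's: both apply Theorem~\ref{777} with $\cl U_i=\cl Y_i$, observe that each $\cl U_E$ is $\ran\big(\bigvee_{i\in E}\Psi_i\big)$ with $\bigvee_{i\in E}\Psi_i\in\fC(K)$, and invoke the complete hyperreflexivity of such ranges. The only point the paper adds which you leave implicit is the citation of \cite[Corollary~3.4]{eletod} to justify that the algebraic sum $\cl X_1\bar\otimes\cl Y_1+\dots+\cl X_n\bar\otimes\cl Y_n$ is already weak* closed, so that it coincides with the $\cl W$ of Theorem~\ref{777}.
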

\begin{proof}
The statement is immediate from Theorem \ref{777}, Theorem \ref{est} and the 
fact that $\cl W$ is weak* closed (see \cite[Corollary 3.4]{eletod}).
\end{proof}

\begin{remark}
  The preceding three results hold more generally (with identical
  proofs) if we replace $\fC(H)$ in the hypotheses with the 
  lattice $\fH_c$ of Schur idempotents with completely hyperreflexive
  range:
  \[ \fH_c = \{ \Phi\in \fH \colon \lambda_c(\Phi)<\infty\}.\] On the
  other hand, $\fC(H)$ seems a more natural class to work with.
\end{remark}

\begin{theorem}\label{888} 
Let $\M_i\subseteq \cl B(H)$ be a ternary masa-bimodule and 
$\U_i\subseteq \cl B(K)$ be a weak* closed subspaces, $i=1,\dots,n$.
Suppose that for every non-empty subset $E = \{i_1,\dots,i_m\}$
of the set $\{1,\dots,n\}$, the subspace 
$$\cl U_E \deq\overline{\U_{i_1} + \cdots + \cl U_{i_m}}^{w^*}$$ 
is completely hyperreflexive. 
Then the space  
\[\W= \overline{\M_1\otimes\U_1 + \cdots + \M_n\otimes\U_n}^{w^*} \]  
is completely hyperreflexive.
\end{theorem}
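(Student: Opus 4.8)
The plan is to deduce this from Theorem~\ref{777} by approximating each ternary masa-bimodule $\M_i$ from the outside by ranges of contractive Schur idempotents and then passing to the limit via Lemma~\ref{1.6}. Concretely, as recalled in the proof of Corollary~\ref{c_ahit}, every ternary masa-bimodule is a descending intersection of ranges of contractive Schur idempotents; so for each $i$ I fix a sequence $(\Phi_i^{(k)})_{k\in\bN}$ in $\fI_1$ with $\ran\Phi_i^{(k+1)}\subseteq\ran\Phi_i^{(k)}$ and $\M_i=\bigcap_k\ran\Phi_i^{(k)}$. Setting $\X_i^{(k)}=\ran\Phi_i^{(k)}$ and
\[\W_k=\overline{\X_1^{(k)}\otimes\U_1+\cdots+\X_n^{(k)}\otimes\U_n}^{w^*},\]
each $\Phi_i^{(k)}$ lies in $\fI_1\subseteq\fC(H)$, and the hypothesis on the spaces $\cl U_E$ is exactly the one required in Theorem~\ref{777}, so each $\W_k$ is completely hyperreflexive.

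The second step is to bound $k_c(\W_k)$ uniformly in $k$, for which I use the explicit estimate~(\ref{eq_longes}) furnished by the proof of Theorem~\ref{777}. Since each $\Phi_i^{(k)}\otimes\id$ is again a contractive Schur idempotent, Proposition~\ref{1.3} gives $\lambda_c(\Phi_i^{(k)})=\lambda(\Phi_i^{(k)}\otimes\id)\leq 2$, uniformly in $k$. The remaining factors $\|\Phi_E^{(k)}(\Psi_{E^c}^{(k)})^{\perp}\|$ in~(\ref{eq_longes}) involve only meets and joins of at most $n$ contractive Schur idempotents, so they are bounded by a constant depending on $n$ alone, while the factors $k_c(\cl U_E)$ do not depend on $k$. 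Hence $\sup_k k_c(\W_k)<\infty$, and in particular $\sup_k k(\W_k)<\infty$.

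The crucial, and hardest, step is to identify $\W$ as the descending intersection of the $\W_k$, that is, to establish
\[\W=\bigcap_{k}\W_k.\]
The inclusion $\W\subseteq\bigcap_k\W_k$ is immediate from $\M_i\subseteq\X_i^{(k)}$. For the reverse inclusion one must interchange the weak* closed span with the descending intersection of the ternary factors; I would prove this by passing to pre-annihilators in $\cl T(H\otimes K)$, where the span becomes an intersection and the problem reduces to showing that $\bigcup_k(\W_k)_{\perp}$ is trace-norm dense in $\W_{\perp}$, using slice-map techniques for weak* closed tensor products (as developed in~\cite{eletod2}) to commute the slices with the intersections $\M_i=\bigcap_k\X_i^{(k)}$, in the spirit of the identity $\U+\bigcap_n\ran\Phi_n=\bigcap_n(\U+\ran\Phi_n)$ used in Theorem~\ref{1.7} via the proof of~\cite[Theorem~2.5]{eletod}. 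Granting this, the $\W_k$ form a descending sequence of hyperreflexive spaces with $\sup_k k(\W_k)<\infty$, so Lemma~\ref{1.6} shows that $\W=\bigcap_k\W_k$ is hyperreflexive with $k(\W)\leq\limsup_k k(\W_k)$. This interchange is the step I expect to be the main obstacle; everything else is a routine combination of Theorem~\ref{777}, the uniform estimate, and Lemma~\ref{1.6}.

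Finally, for complete hyperreflexivity I would argue as at the end of the proof of Theorem~\ref{777}: for a separable Hilbert space $\cl H$,
\[\cl B(\cl H)\bar\otimes\W=\overline{(\cl B(\cl H)\bar\otimes\M_1)\otimes\U_1+\cdots+(\cl B(\cl H)\bar\otimes\M_n)\otimes\U_n}^{w^*},\]
and each $\cl B(\cl H)\bar\otimes\M_i$ is again a ternary masa-bimodule, while the spaces $\U_i$, and hence the completely hyperreflexive spaces $\cl U_E$, are unchanged. Thus $\cl B(\cl H)\bar\otimes\W$ has exactly the same form as $\W$, so the hyperreflexivity established above applies to it and yields the complete hyperreflexivity of $\W$.
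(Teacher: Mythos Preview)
Your approach is essentially the paper's: approximate each $\M_i$ from above by ranges of contractive Schur idempotents, apply Theorem~\ref{777} to the approximants, observe that the estimate~(\ref{eq_longes}) gives uniform hyperreflexivity constants because all the idempotents involved are contractive, and then descend via Lemma~\ref{1.6}; the complete hyperreflexivity step is also the same.

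The only substantive difference concerns the step you correctly flag as the main obstacle, namely the interchange $\W=\bigcap_k\W_k$. The paper avoids having to prove this for a simultaneous (diagonal) approximation of all $n$ factors. Instead it replaces the $\M_i$ one at a time: fixing $j_2,\dots,j_n$ and intersecting over the first index, \cite[Corollary~4.21]{eletod2} gives directly that
\[
\bigcap_{j}\overline{\ran\Phi_j^1\otimes\U_1+\sum_{i\geq 2}\ran\Phi_{j_i}^i\otimes\U_i}^{w^*}
=\overline{\M_1\otimes\U_1+\sum_{i\geq 2}\ran\Phi_{j_i}^i\otimes\U_i}^{w^*},
\]
and Lemma~\ref{1.6} shows this space is hyperreflexive with a bound independent of $j_2,\dots,j_n$. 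Iterating in the second coordinate, then the third, and so on, reaches $\W$ after $n$ steps. Thus the interchange you sketch via pre-annihilators and slice maps is exactly what \cite[Corollary~4.21]{eletod2} supplies, but the one-coordinate-at-a-time organisation lets one cite it off the shelf rather than reprove a multi-variable version.
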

\begin{proof} 
  As in the proof of Corollary~\ref{c_ahit}, we may write
\[ \M_i=\bigcap_{j=1}^\infty\ran\Phi_j^i, \;\;\;i=1,\dots,n, \] where each $\Phi_j^i$ is a 
contractive Schur idempotent such that $\Phi_{j+1}^i\leq\Phi_j^i$ for all $i$ and $j$. 
Fix natural numbers $j_2,\dots,j_n$. Letting
\[\V_j= \overline{\ran\Phi_j^1\otimes\U_1+\sum_{i=2}^n
\ran\Phi_{j_i}^i\otimes\U_i}^{w^*}, \ \ \ j\in \bb{N},\] 
we see that $\cl V_{j+1}\subseteq \cl V_j$ for each $j$ and, by Theorem \ref{777}, $\sup_jk(\V_j)<\infty$.
By \cite[Corollary 4.21]{eletod2}, 
\[ \W_1 \deq\cap_{j\in\bN}\V_j=
\overline{\M_1\otimes\U_1+\sum_{i=2}^n
\ran\Phi_{j_i}^i\otimes\U_i}^{w^*} . \]
By Lemma \ref{1.6}, the space $\W_1$ is hyperreflexive. 
Continuing inductively, we see that the space
\[ \W_m \deq
\overline{ \sum_{i=1}^m \M_i\otimes\U_i +    
\sum_{i=m+1}^n \ran\Phi_{j_{i}}^{i}\otimes\U_{i}}^{w^*}  \] is hyperreflexive for each $m = 1,\dots,n$; 
in particular, the space $\W=\W_n$ is hyperreflexive. 

Let $\cl H$ be a separable Hilbert space. 
The space  $\W\bar\otimes \cl B(\cl H) $ is unitarily equivalent to
\[ \overline{ (\M_1 \bar\otimes \cl B(\cl H)) \otimes(\U_1 \bar\otimes \cl B(\cl H)) + \cdots +
(\M_n \bar\otimes \cl B(\cl H)) \otimes (\U_n \bar\otimes \cl B(\cl H))}^{w^*}.\]  

Since the spaces $\M_i \bar\otimes \cl B(\cl H)$ are ternary masa bimodules, while the spaces 
$\U_i \bar\otimes \cl B(\cl H)$ are completely hyperreflexive, 
by the first part of the proof, the space $\W\bar\otimes B(\cl H) $ is hyperreflexive.
\end{proof}

\begin{corollary}\label{c_tenvn}
If $\cl M$ is a von Neumann algebra of type I and $\cl A$ is a nest algebra then 
$\cl M\bar\otimes\cl A$ is hyperreflexive and $k(\cl M\bar\otimes\cl A) \leq 5$.
\end{corollary}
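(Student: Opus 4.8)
The plan is to view $\cl M\bar\otimes\cl A$ as a single-term instance of Theorem~\ref{888} and then to read off the constant $5$ by tracking the estimate~(\ref{eq_longes}) through the limiting argument. Two structural facts feed into this. First, a type~I von Neumann algebra is a ternary masa-bimodule: as a $*$-algebra it is automatically a ternary ring of operators, and being a masa-bimodule is equivalent to having abelian commutant, which holds (after passing to a suitable normal representation) exactly for the type~I algebras. Second, a nest algebra $\cl A$ is completely hyperreflexive with $k_c(\cl A)=1$; indeed $\cl A\bar\otimes\cl B(\cl H)$ is itself a nest algebra (that of the nest $\{N\otimes I\}$), and nest algebras have hyperreflexivity constant $1$ by~\cite{dale} --- this is the fact already invoked in Corollary~\ref{xxx4}.

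With these inputs I would apply Theorem~\ref{888} with $n=1$, $\M_1=\cl M$ and $\cl U_1=\cl A$. The only non-empty subset of $\{1\}$ is $E=\{1\}$, for which $\cl U_E=\cl A$ is completely hyperreflexive by the previous paragraph; hence the hypotheses of Theorem~\ref{888} are met and $\cl M\bar\otimes\cl A$ is completely hyperreflexive, in particular hyperreflexive.

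It remains to pin down the constant, and for this I would follow the proof of Theorem~\ref{888} in the present single-term situation. Write $\cl M=\bigcap_j\ran\Phi_j$ as a decreasing intersection of ranges of contractive Schur idempotents $\Phi_j$, and set $\cl V_j=\overline{\ran\Phi_j\otimes\cl A}^{w^*}$. Corollary~\ref{444}, applied with $\Phi=\Phi_j$ and $\cl U=\cl A$, gives
\[
k(\cl V_j)\leq \lambda_c(\Phi_j)\,\|\Phi_j^{\perp}\|+k_c(\cl A)\,\|\Phi_j\|.
\]
Since $\Phi_j\in\fI_1$ we have $\|\Phi_j\|\leq 1$ and $\|\Phi_j^{\perp}\|\leq 1+\|\Phi_j\|\leq 2$, while $\Phi_j\otimes\id$ is again a contractive Schur idempotent, so Proposition~\ref{1.3} yields $\lambda_c(\Phi_j)=\lambda(\Phi_j\otimes\id)\leq 2$. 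Together with $k_c(\cl A)=1$ this gives $k(\cl V_j)\leq 2\cdot 2+1\cdot 1=5$ for every $j$. Finally $\cl V_{j+1}\subseteq\cl V_j$, and $\bigcap_j\cl V_j=\cl M\bar\otimes\cl A$ by~\cite[Corollary~4.21]{eletod2}, so Lemma~\ref{1.6} produces $k(\cl M\bar\otimes\cl A)\leq\limsup_j k(\cl V_j)\leq 5$.

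The step I expect to require the most care is the first structural fact. In a representation with non-abelian commutant a type~I algebra $\cl M$ fails to be a masa-bimodule, so one must either adopt the convention that $\cl M$ is presented as a masa-bimodule or reduce the higher-multiplicity case to the multiplicity-free one, writing $\cl M\bar\otimes\cl A$ as an ampliation $(\cl M_0\bar\otimes\cl A)\bar\otimes 1$ of a space already shown to be completely hyperreflexive and checking that this ampliation does not increase the constant. Once $\cl M$ is in hand as a ternary masa-bimodule, the rest is bookkeeping: the value $5$ is forced by the bound $\lambda_c(\Phi_j)\|\Phi_j^{\perp}\|\leq 4$ from Proposition~\ref{1.3} together with $k_c(\cl A)=1$.
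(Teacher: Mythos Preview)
Your argument is correct and follows one of the two routes the paper itself names: apply Theorem~\ref{888} with $n=1$, using that a type~I von Neumann algebra is (in a representation with abelian commutant) a ternary masa-bimodule and that nest algebras are completely hyperreflexive with constant~$1$. You then recover the bound~$5$ by unpacking the $n=1$ case of the proof of Theorem~\ref{888} through Corollary~\ref{444} and Lemma~\ref{1.6}; this is legitimate bookkeeping and the arithmetic $\lambda_c(\Phi_j)\|\Phi_j^\perp\|+k_c(\cl A)\|\Phi_j\|\leq 2\cdot 2+1\cdot1=5$ is exactly right.

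The paper's proof is the one-liner ``Immediate from Theorem~\ref{888} or, alternatively, from Corollary~\ref{xxx4}.'' The second route, which you do not take, is slightly more direct for the constant: one writes $\cl M\bar\otimes\cl A=(\cl M\bar\otimes\cl B(K))\cap(\cl B(H)\bar\otimes\cl A)$, notes that the first factor is a ternary masa-bimodule and the second is a nest algebra (hence a nest algebra bimodule with $k=1$), and reads off $k\leq 5$ straight from Corollary~\ref{xxx4} without reopening the proof of Theorem~\ref{888}. Your route and this one reach the same number for essentially the same reason, since Corollary~\ref{xxx4} itself traces back through Corollary~\ref{xxx3} to the same $\lambda\leq 2$ bound from Proposition~\ref{1.3}. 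Your caveat about the representation of $\cl M$ is well placed; the paper is tacitly assuming $\cl M$ sits as a masa-bimodule, and your proposed reduction via ampliation is the standard way to make this honest.
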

\begin{proof} 
Immediate from Theorem \ref{888} or, alternatively, from Corollary \ref{xxx4}. 
\end{proof}

\section*{Acknowledgements}

The authors are grateful to an anonymous referee for helpful
comments and suggestions.

\end{document}